%!TEX program = pdflatex

 \documentclass[10pt,reqno]{amsart}

 \usepackage{mathtools}
 \usepackage{amsthm,amssymb,mathrsfs,enumitem,wasysym}
 
 \usepackage{esint}
 
 \usepackage[numbers,sort&compress]{natbib}
 \usepackage{mathscinet}

 \numberwithin{equation}{section}
 \numberwithin{figure}{section}
 \theoremstyle{plain}
 \newtheorem{thm}{Theorem}[section]
   \theoremstyle{definition}
   \newtheorem{defn}[thm]{Definition}
   \newtheorem*{defn*}{Definition}
   \theoremstyle{remark}
   \newtheorem*{rem*}{Remark}
   \theoremstyle{plain}
   
   \theoremstyle{plain}
   \newtheorem{prop}[thm]{Proposition}
   
   \theoremstyle{definition}
   
   \newtheorem{assump}[thm]{Assumption}

%%%%%%%%%%%%% Math snippets 

 \makeatletter
 \newcommand{\norm}{\@ifstar{\@normb}{\@normi}}
 \newcommand{\@normb}[2]{\left\Vert{#1}\right\Vert_{#2}}
 \newcommand{\@normi}[2]{\Vert{#1}\Vert_{#2}}
 \makeatother

 \global\long\def\Sob#1#2{W^{#1,#2}} 
 \global\long\def\Sobloc#1#2{W^{#1,#2}_{\mathrm{loc}}} 
 \global\long\def\oSob#1#2{{W}^{#1,#2}_0}
 \global\long\def\Leb#1{L^{#1}}

 \newcommand{\action}[1]{\left<#1 \right>}
 \newcommand{\boldb}{\mathbf{b}}
 \newcommand{\boldF}{\mathbf{F}}
 \newcommand{\boldG}{\mathbf{G}}
 \newcommand{\boldu}{\mathbf{u}}

 \DeclareMathOperator{\Div}{div}
 \newcommand{\relphantom}[1]{\mathrel{\phantom{#1}}}
 
 \newcommand{\myd}[1]{\,d{#1}}

  \title{Elliptic equations in divergence form with   drifts in $L^2$} 
 
\author{Hyunwoo Kwon}
 \address{(28187) Department of Mathematics, Republic of Korea Air Force Academy, Postbox 335-2, 635, Danjae-ro   Sangdang-gu, Cheongju-si  Chung\-cheong\-buk-do, Republic of Korea}
 \email{willkwon@sogang.ac.kr;willkwon@afa.ac.kr}
 
 \keywords{elliptic equations, singular drift terms}
 \subjclass[2010]{35J15, 35J25}
 
 \date{\today}
 
 \allowdisplaybreaks

 \title{Elliptic equations in divergence form with   drifts in $L^2$} 

%    Only \author and \address are required; other information is
%    optional.  Remove any unused author tags.

%    author one information
% \author[short version for running head]{name for top of paper} 

%    author two information 

%    \subjclass is required.

 \keywords{elliptic equations, singular drift terms}
 \subjclass[2010]{35J15, 35J25}

\begin{document}

%    Abstract is required.
\begin{abstract}
We consider the Dirichlet problem for second-order linear elliptic equations in divergence form 
\begin{equation*}
-\Div(A\nabla u)+\boldb \cdot \nabla u+\lambda u=f+\Div \boldF\quad \text{in } \Omega\quad\text{and}\quad  u=0\quad \text{on } \partial\Omega,
\end{equation*}
in bounded Lipschitz domain $\Omega$ in $\mathbb{R}^2$, where $A:\mathbb{R}^2\rightarrow \mathbb{R}^{2^2}$, $\boldb : \Omega\rightarrow \mathbb{R}^2$, and $\lambda \geq 0$ are given. If $2<p<\infty$ and $A$ has a small mean oscillation in small balls, $\Omega$ has small Lipschitz constant, and  $\Div A,\,\boldb \in \Leb{2}(\Omega;\mathbb{R}^2)$, then we prove existence and uniqueness of weak solutions in $\oSob{1}{p}(\Omega)$ of the problem. Similar result also holds for the dual problem. 
\end{abstract}

\maketitle

 \section{Introduction}
This paper is devoted to complementing known results on $\Sob{1}{p}$-estimates for second-order linear elliptic equations with singular drifts terms. Let $\Omega$ be a bounded Lipschitz domain in $\mathbb{R}^n$, $n\geq 2$. For a fixed constant $\lambda \geq 0$ and a given vector field $\boldb=(b^1,b^2,\dots,b^n) : \Omega\rightarrow \mathbb{R}^n$, we consider the following Dirichlet problems of linear  elliptic equations of   second-order: 
\begin{equation}\label{eq:nondivergence-type}\tag{$D$}
\left\{
\begin{alignedat}{2}
-\Div(A\nabla u) +\boldb \cdot \nabla u+\lambda u&=f+\Div \boldF  &&\quad \text{in } \Omega, \\
u&=0 &&\quad \text{on } \partial\Omega.
\end{alignedat}
\right.
\end{equation}
and
\begin{equation}\label{eq:divergence-type}\tag{$D'$}
\left\{
\begin{alignedat}{2}
-\Div(A^T\nabla v)-\Div(v\boldb)+\lambda v&=g+\Div \boldG  &&\quad \text{in } \Omega, \\
v&=0 &&\quad \text{on } \partial\Omega,
\end{alignedat}
\right.
\end{equation}
Here $A=(a^{ij}):\mathbb{R}^n\rightarrow \mathbb{R}^{n\times n}$ denotes an $n\times n$ real matrix-valued measurable function  which is uniformly elliptic, that is, there exists $0<\delta<1$ such that 
\begin{equation}\label{eq:uniformly-elliptic} 
\delta|\xi|^2 \leq \sum_{i,j=1}^n a^{ij}(x)\xi_i \xi_j\quad \text{and}\quad \max_{1\leq i,j\leq n} |a^{ij}(x)|\leq \delta^{-1}\quad    \text{for all } x,\xi \in \mathbb{R}^n. \end{equation}

{$\Sob{1}{p}$-estimates for the problems \eqref{eq:nondivergence-type} and \eqref{eq:divergence-type} were established by several authors under various assumptions on the leading coefficients $a^{ij}$ and the domains $\Omega$ when $\boldb =\mathbf{0}$ or more generally $\boldb \in \Leb{\infty}(\Omega;\mathbb{R}^n)$; see \cite{GT,AQ02,B05,F96,DK11,DK11-3,DK10,K07} and references therein. Also, see the recent survey paper of Dong \cite{D20}.  In particular, Dong-Kim \cite{DK11-3} proved $\Sob{1}{p}$-estimates for the problems \eqref{eq:nondivergence-type} and \eqref{eq:divergence-type} when the leading coefficients satisfy small mean oscillations in small balls and $\boldb\in \Leb{\infty}(\Omega;\mathbb{R}^n)$ on a bounded Lipschitz domain with small Lipschitz constant, see Assumptions \ref{assump:leading-coefficient} and \ref{assump:domain-regularity} for precise statements. One may ask whether we can obtain $\Sob{1}{p}$-estimates for the problems \eqref{eq:nondivergence-type} and \eqref{eq:divergence-type} with unbounded drifts. 

Suppose that $\boldb \in \Leb{q}(\Omega;\mathbb{R}^n)$, where  $n\leq q<\infty$ if $n\geq 3$ and $2<q<\infty$ if $n=2$. In Ladyzhenskaya-Ural'tseva \cite[Chapter 3]{LU68},  they considered the following Dirichlet problem 
\[   -\Div(A\nabla u +u\boldb)+\mathbf{c}\cdot \nabla u+du=f+\Div \boldF\quad \text{in } \Omega,\quad u=0\quad \text{on } \partial\Omega \]
where  $A$ satisfies \eqref{eq:uniformly-elliptic}, $\boldb,\mathbf{c} \in \Leb{q}(\Omega;\mathbb{R}^n)$, $d\in \Leb{q/2}(\Omega)$, and $\Omega$ is a bounded Lipschitz domain in $\mathbb{R}^n$.   It was shown that under some restricted condition on the zeroth order term $d$, for every $f \in \Leb{2\hat{n}/(\hat{n}+2)}(\Omega)$ and $\boldF \in \Leb{2}(\Omega;\mathbb{R}^n)$, there exists a unique weak solution $u\in \oSob{1}{2}(\Omega)$ of the problem. Here $\hat{n}=n$ if $n\geq 3$ and $\hat{n}=2+\varepsilon$ if $n=2$. Stampacchia \cite{S65} also considered a similar problem, but the result is similar to that of Ladyzhenskaya-Ural'tseva. 

To the best knowledge of the author, Trudinger \cite{T73} first proved that given $\lambda \geq 0$, $f\in \Leb{2}(\Omega)$, and $\boldF \in \Leb{2}(\Omega;\mathbb{R}^n)$, there exists a unique weak solution $u$ in $\oSob{1}{2}(\Omega)$ for the problem \eqref{eq:nondivergence-type}. The key tools to prove the theorem are the weak maximum principle and the Fredholm alternative theorem. Later, Droniou \cite{D02} gave another proof by showing $\Sob{1}{2}$-estimates for the problem \eqref{eq:divergence-type} and duality method.  This result was extended by Kim-Kim \cite{KK15} who proved  $\Sob{1}{p}$-estimates for the problem \eqref{eq:nondivergence-type} with $\lambda=0$ when $A$ is the identity matrix, $q'<p<\infty$, and $\Omega$ is a bounded $C^1$-domain. Later, Kang-Kim \cite{KK17} proved $\Sob{1}{p}$-estimates for the problem \eqref{eq:nondivergence-type} when $q'<p<\infty$, $A$ has small mean oscillation, and $\Omega$ is a bounded Lipschitz domain with small Lipschitz constant.  Similar results also hold for the problem \eqref{eq:divergence-type}. We also mention that there are some recent results on the problems  \eqref{eq:nondivergence-type} and \eqref{eq:divergence-type} when the drift $\boldb$ is in weak $L^n$-space; see Moscariello \cite{M11}, Kim-Tsai \cite{KT20}, and the recent result of the author \cite{Kwon21}.

The purpose of this paper is to complement $\Sob{1}{p}$-results on elliptic equations with the drift  $\boldb \in \Leb{2}(\Omega;\mathbb{R}^2)$, which were not considered in \cite{KK15,KK17,KK19}. We remark that our result is   new even if  $A$ is the identity matrix. The motivation for writing this paper is the recent paper due to Krylov \cite{Krylov21}  who proved $\Sob{2}{p}$-result for second-order elliptic equations of non-divergence form with the drift $\boldb$ in $\Leb{n}(\Omega;\mathbb{R}^n)$, $n\geq 2$. More precisely, if $1<p<n$, $\Omega$ is a bounded $C^{1,1}$-domain, $A$ has small mean oscillation (see Assumption \ref{assump:leading-coefficient}), $\boldb \in \Leb{n}(\Omega;\mathbb{R}^n)$, and $\lambda\geq 0$, then there exists a unique $u\in \oSob{1}{p}(\Omega)\cap \Sob{2}{p}(\Omega)$ satisfying 
\[   \sum_{i,j=1}^n a^{ij}D_{ij}u +\boldb \cdot \nabla u -\lambda u =f\quad \text{in } \Omega.\]
Related to our paper, this is the first result on solvability of elliptic equations with $\boldb \in \Leb{2}(\Omega;\mathbb{R}^2)$. Motivated by this result, one may consider $\Sob{1}{p}$-results for the problems \eqref{eq:nondivergence-type} and \eqref{eq:divergence-type} when $\boldb \in \Leb{2}(\Omega;\mathbb{R}^2)$. 

In this paper, it will be shown in Theorem \ref{thm:main-theorem} that if $2<p<\infty$, $A$ has a small mean oscillation in small balls, $\Div A,\boldb \in \Leb{2}(\Omega;\mathbb{R}^2)$, and $\Omega$ has small Lipschitz constant, then for every $\lambda \geq 0$, $f\in \Leb{p}(\Omega)$, and $\boldF\in \Leb{p}(\Omega;\mathbb{R}^2)$, there exists a unique weak solution $u\in \oSob{1}{p}(\Omega)$ of the problem \eqref{eq:nondivergence-type}.   By duality, we have a similar result for the problem \eqref{eq:divergence-type}, see Section \ref{sec:main} for the precise statements and the definition of $\Div A \in \Leb{2}(\Omega;\mathbb{R}^2)$.  

Our method to prove Theorem \ref{thm:main-theorem} is to use a functional analytic argument as in \cite{KK17,KK18,KK19}. A key tool is the following $\varepsilon$-inequality (Proposition \ref{prop:Gerhardt}) inspired by Gerhardt \cite{G79}: suppose that $\Omega$ is a bounded  Lipschitz domain in $\mathbb{R}^2$ and $2<p<\infty$.  Then for each $\varepsilon>0$,  there exists a constant $C_\varepsilon=C_\varepsilon(\varepsilon,p,\boldb,\Omega)>0$ such that 
\begin{equation}\label{eq:eps-inequality}
  \norm{\boldb\cdot \nabla u}{\Sob{-1}{p}(\Omega)}\leq \varepsilon \norm{\nabla u}{\Leb{p}(\Omega)} +C_\varepsilon \norm{u}{\Leb{p}(\Omega)} 
  \end{equation}
for all $u\in \Sob{1}{p}(\Omega)$. Using this estimate, we prove that if $2<p<\infty$, $A$ has small mean oscillations on small balls and $\Omega$ has small Lipschitz constant, then for sufficiently large $\lambda_1$, the following holds for  $\lambda \geq \lambda_1$: if $f\in \Leb{p}(\Omega)$, and $\boldF\in \Leb{p}(\Omega;\mathbb{R}^2)$, then there exists a unique weak solution $u\in \oSob{1}{p}(\Omega)$ of the problem  \eqref{eq:nondivergence-type}. Similar results also hold for the problem \eqref{eq:divergence-type}. This result induces an operator $\mathcal{L}_p+\lambda_1 \mathcal{I}_{p}:\Sob{-1}{p}(\Omega)\rightarrow \oSob{1}{p}(\Omega)$ whose inverse can be regarded as a compact operator on $\Leb{p}(\Omega)$. Hence by the Fredholm alternative theorem (see 	\cite[Theorem 6.6]{Brezis11} e.g.), it suffices to prove the uniqueness of weak solutions in $\oSob{1}{p}(\Omega)$ for the problem \eqref{eq:nondivergence-type}, see Section \ref{sec:completion} for the definition of $\mathcal{L}_p+\lambda_1 \mathcal{I}_{p}$ and the reduction. To show the uniqueness of weak solutions in $\oSob{1}{p}(\Omega)$ of the problem \eqref{eq:nondivergence-type}, we use an Alexsandrov type maximum principle, which was recently proved by Krylov \cite[Corollary 3.1]{krylov2020stochastic}, see Theorem \ref{thm:Alexsandrov}. To use this theorem in our setting, we assume in addition that $\Div A \in \Leb{2}(\Omega;\mathbb{R}^2)$ to convert an elliptic equation in divergence form into an equation in non-divergence from. It seems to be open whether we can remove the additional assumption  $\Div A\in \Leb{2}(\Omega;\mathbb{R}^2)$. 

The organization of this paper is as follows. We introduce some notations and state the main theorem in the next section. In Section \ref{sec:large-lambda}, we prove $\Sob{1}{p}$-results for the problems \eqref{eq:nondivergence-type} and \eqref{eq:divergence-type} for sufficiently large $\lambda$. Next, we prove the uniqueness of  weak solutions of the problem \eqref{eq:nondivergence-type} in Section \ref{sec:uniqueness}. Proof of the main theorem is presented in Section \ref{sec:completion}.   For the reader's convenience, we give all necessary details that can be found in \cite{KK17,KK18,KK19}.  

 \subsection*{Acknowledgements}
The author would like to thank Prof.\ Doyoon Kim for introducing the recent result of Krylov \cite{krylov2020stochastic} to the author and for a valuable discussion. Also, the author would like to thank the advisor Prof. Hyunseok Kim for comments on previous drafts. Finally, the author would like to  thank the anonymous referee for the careful reading of the manuscript and for giving useful comments and suggestions
to improve the paper.

\section{Notation and Main result}\label{sec:main}
In this section, we introduce several notations used in this article. Also, we give the main theorem of this paper. We use ``$:=$'' to denote a definition. As usual, $\mathbb{R}^n$ stands the standard Euclidean space of $n$-points and $|\cdot|$ is the standard Euclidean norm on $\mathbb{R}^n$. For $r>0$ and $x\in \mathbb{R}^n$, we write $B_r(x):=\{ y \in \mathbb{R}^n : |x-y|<r\}$. We also write $B_r:=B_r(0)$. For $x\in \mathbb{R}^n$, we write $x=(x',x_n)$ where $x'\in \mathbb{R}^{n-1}$ and $B_r'(x'):=\{ y' \in \mathbb{R}^{n-1} : |x'-y'|<r\}$. For $1\leq j,k\leq n$, we denote 
\[   D_j u = \frac{\partial u}{\partial x_j},\quad D_{kj} u= D_j D_k u=u_{x_k x_j}.\]
We also use the notation $\nabla u:=(D_1u ,\dots, D_n u)$ for the gradient of $u$.  

We denote by $X'$ the dual space of a Banach space $X$. The dual pairing of $X$ and $X'$ is denoted by $\action{\cdot,\cdot}_{X',X}$ or simply $\action{\cdot,\cdot}$.  For $k\in \mathbb{N}\cup\{\infty\}$, let $C_c^k(\Omega)$ be the space of all functions in $C^k(\mathbb{R}^n)$ with compact supports in $\Omega$ and let $C^k(\overline{\Omega})$ the space of the restrictions to $\overline{\Omega}$ of all functions in $C^k(\mathbb{R}^n)$. For $k\in \mathbb{N}$ and $1\leq p<\infty$, $\Leb{p}(\Omega)$ and $\Sob{k}{p}(\Omega)$ denote the standard $\Leb{p}$-space on $\Omega$ with Lebesgue measure and the Sobolev spaces on $\Omega$, respectively. We define $\oSob{1}{p}(\Omega)$ the closure of $C_c^\infty(\Omega)$ in $\Sob{1}{p}(\Omega)$. For $1<p<\infty$, we write $p':=p/(p-1)$ the conjugate exponent to $p$. For such $p$, we define $\Sob{-1}{p}(\Omega):=(\oSob{1}{p'}(\Omega))'$. For $1\leq p<n$, $p^*:=\frac{np}{n-p}$  the Sobolev exponent.  For $1\leq p<\infty$, by $\Leb{p}(\Omega;\mathbb{R}^m)$, we denote the set of all $\mathbb{R}^m$-valued measurable functions $\mathbf{u}=(u^1,\dots,u^m)$ on $\Omega$ satisfying 
\[   \norm{\boldu}{\Leb{p}(\Omega)} :=\left(\int_\Omega |\boldu(x)|^p \myd{x}\right)^{1/p}<\infty.\]  
Similarly, $C_c^\infty(\Omega;\mathbb{R}^m)$ denotes the set of all $\mathbb{R}^m$-valued measurable functions $\Phi=(\phi^1,\dots,\phi^m)$ on $\Omega$ satisfying $\phi^i \in C_c^\infty(\Omega)$ for all $1\leq i\leq m$. 

For open sets $U$ and $V$, we write $V\Subset U$ if $\overline{V}$ is compact and $\overline{V}\subset U$. For $1\leq p<\infty$ and $k\in \mathbb{N}\cup\{0\}$, we write $\Sobloc{k}{p}(\Omega)$ if $u:\Omega \rightarrow \mathbb{R}$ satisfy $u \in \Sob{k}{p}(\Omega')$ for any  $\Omega'\Subset \Omega$. Similarly, a vector field $\boldu :\Omega\rightarrow \mathbb{R}^m$ is in $\Leb{p}_{\mathrm{loc}}(\Omega;\mathbb{R}^m)$ if $\boldu \in \Leb{p}(\Omega';\mathbb{R}^m)$ for any $\Omega'\Subset \Omega$. For a measurable function $f$ on $E\subset \mathbb{R}^n$, we write 
\[   (f)_{E} = \frac{1}{|E|} \int_{E} f \myd{x}=\fint_{E} f\myd{x},\]
where $|E|$ denotes the $n$-dimensional Lebesgue measure of $E$.  Finally, by  $C=C(p_1,\dots,p_k)$, we denote a generic positive constant depending only on the parameters $p_1,\dots,p_k$.

We define weak solutions of the problem \eqref{eq:nondivergence-type} and \eqref{eq:divergence-type} as follows.
\begin{defn}
Let $\lambda \geq0$, $1<p<\infty$, and $\boldb : \Omega \rightarrow \mathbb{R}^n$ be a given measurable vector field. 
\begin{enumerate}
\item Given $f\in \Leb{p}(\Omega)$ and $\boldF\in \Leb{p}(\Omega;\mathbb{R}^n)$, we say that $u\in \oSob{1}{p}(\Omega)$ is a \emph{weak solution} of \eqref{eq:nondivergence-type} if $\boldb \cdot \nabla u \in \Leb{1}_{\mathrm{loc}}(\Omega)$ and 
\begin{equation}\label{eq:weak-sol-1}
  \int_\Omega A\nabla u \cdot \nabla \phi \myd{x}+\int_\Omega (\boldb \cdot \nabla u)\phi \myd{x}+\lambda \int_\Omega u\phi \myd{x}=\int_\Omega f \phi \myd{x}-\int_\Omega \boldF \cdot \nabla \phi \myd{x} 
\end{equation} 
for all $\phi \in C_c^\infty(\Omega)$.  %We say that the problem \eqref{eq:nondivergence-type} has a \emph{unique} weak solution in $\oSob{1}{p}(\Omega)$ if a weak solution $u\in \oSob{1}{p}(\Omega)$ satisfies \eqref{eq:weak-sol-1} with $f=0$ and $\boldF=\mathbf{0}$, then $u$ is identically zero in $\Omega$. 
\item Given $g\in \Leb{p'}(\Omega)$ and $\boldG\in \Leb{p'}(\Omega;\mathbb{R}^n)$, we say that $v\in \oSob{1}{p'}(\Omega)$ is a \emph{weak solution} of \eqref{eq:divergence-type} if $v\boldb   \in \Leb{1}_{\mathrm{loc}}(\Omega;\mathbb{R}^n)$ and 
\begin{equation}\label{eq:weak-sol-2}
\int_\Omega (A^T\nabla v +v\boldb)\cdot \nabla \psi \myd{x}+\lambda \int_\Omega v\psi \myd{x}=\int_\Omega g \psi \myd{x}-\int_\Omega \boldG \cdot \nabla \psi \myd{x} 
\end{equation} 
for all $\psi \in C_c^\infty(\Omega)$.     %We say that the problem \eqref{eq:divergence-type} has a \emph{unique} weak solution in $\oSob{1}{p'}(\Omega)$ if a weak solution $v\in \oSob{1}{p'}(\Omega)$ satisfies \eqref{eq:weak-sol-1} with $g=0$ and $\boldG=\mathbf{0}$, then $v$ is identically zero in $\Omega$. 
\end{enumerate}
\end{defn}  
Let $\boldb \in \Leb{n}(\Omega;\mathbb{R}^n)$. If $n'\leq p<\infty$, then by H\"older's inequality,  we have $\boldb \cdot \nabla v \in \Leb{1}(\Omega)$ for any $v \in \Sob{1}{p}(\Omega)$. If $1<p\leq n$, then it follows from H\"older's inequality and Sobolev's embedding theorem that 
\[   \norm{v\boldb}{\Leb{1}(\Omega)}\leq \norm{\boldb}{\Leb{n}(\Omega)}\norm{v}{\Leb{n'}(\Omega)}\leq C\norm{\boldb}{\Leb{n}(\Omega)}\norm{v}{\Sob{1}{p}(\Omega)} \]
for all $v\in \Sob{1}{p}(\Omega)$. Hence if we have an unbounded drift  $\boldb \in \Leb{n}(\Omega;\mathbb{R}^n)$, then the range of $p$ is limited to ensure the well-definedness of weak solutions in $\oSob{1}{p}(\Omega)$ for  problems \eqref{eq:nondivergence-type} and \eqref{eq:divergence-type}, respectively.

We impose the following regularity assumption on the leading coefficients: 
\begin{assump}[$\gamma$]\label{assump:leading-coefficient}
There exists a constant $R_0 \in (0,1]$ such that 
\[\max_{1\leq i,j\leq n} \fint_{B_r(x)} \left|a^{ij}(y)-(a^{ij})_{B_r(x)} \right|\myd{y}\leq \gamma \]
 for any $x\in \mathbb{R}^n$ and $0<r\leq R_0$.
\end{assump} 
Note that if  $A$ is in VMO (see e.g. \cite{K07}), then Assumption \ref{assump:leading-coefficient} $(\gamma)$ is satisfied for any $\gamma>0$. 

 Next, we impose the following regularity assumption on the boundary of the domain $\Omega$: 
\begin{assump}[$\theta$]\label{assump:domain-regularity}
There is a constant $R_0\in (0,1]$ such that for any $x_0 \in \partial\Omega$, there exists a Lipschitz function $\sigma : \mathbb{R}^{n-1}\rightarrow \mathbb{R}$ such that 
\[  \Omega \cap B_{R_0}(x_0) =\{ x \in B_r(x_0): x_n>\sigma(x') \} \]
and 
\[   \sup_{x',y'\in B_{R_0}'(x_0'), x'\neq y'} \frac{|\sigma(x')-\sigma(y')|}{|x'-y'|} \leq \theta \]
in some coordinate system. 
\end{assump}
It is easy to check that every $C^1$-domain satisfies Assumption \ref{assump:domain-regularity} $(\theta)$ for any $\theta>0$. 

To state our main theorem, we introduce the definition of weak $L^2$-divergence for a bounded measurable matrix-valued function. 
\begin{defn}
A bounded measurable matrix-valued function $A:\mathbb{R}^2\rightarrow \mathbb{R}^{2\times 2}$ has \emph{weak $\Leb{2}$-divergence in $\Omega$} if  there exists a vector field $\mathbf{c}$ in $\Leb{2}(\Omega;\mathbb{R}^2)$ such that 
\[   \int_\Omega \sum_{i,j=1}^2 a^{ij} D_i \phi^j dx=-\int_\Omega \mathbf{c}  \cdot \Phi \myd{x} \]
for all  $\Phi=(\phi^1,\phi^2)\in C_c^\infty(\Omega;\mathbb{R}^2)$. In this case, we write $\mathbf{c}=\Div A$ and $\Div A \in \Leb{2}(\Omega;\mathbb{R}^2)$. 
\end{defn}

Now we state the main theorem of this paper.  
\begin{thm}\label{thm:main-theorem}
Let $\Omega$ be a bounded domain in $\mathbb{R}^2$,  $2<p<\infty$, and $\lambda \geq 0$. Suppose that  $A$ satisfies \eqref{eq:uniformly-elliptic}, $\Div A \in \Leb{2}(\Omega;\mathbb{R}^2)$, and $\boldb \in \Leb{2}(\Omega;\mathbb{R}^2)$.  Then there exist constants $\gamma=\gamma(\delta, p, \norm{\Div A}{\Leb{2}(\Omega)},\norm{\boldb}{\Leb{2}(\Omega)})$  and $\theta=\theta(\delta,p)>0$   such that under Assumptions \ref{assump:leading-coefficient} $(\gamma)$ and  \ref{assump:domain-regularity} $(\theta)$, the following hold: 
\begin{enumerate}
\item[\rm (i)] For every $f\in \Leb{p}(\Omega)$ and $\boldF\in \Leb{p}(\Omega;\mathbb{R}^2)$, there exists a unique weak solution $u\in \oSob{1}{p}(\Omega)$ of \eqref{eq:nondivergence-type}. Moreover we have 
\[      \norm{\nabla u}{\Leb{p}(\Omega)}+\lambda^{1/2}\norm{u}{\Leb{p}(\Omega)}\leq C \left[\min \left(1,\lambda^{-1} \right)\norm{f}{\Leb{p}(\Omega)}+\norm{\boldF}{\Leb{p}(\Omega)} \right]  \]
for some constant $C$ independent of $u$, $f$, $\boldF$, and $\lambda$.
\item[\rm (ii)] For every $g\in \Leb{p'}(\Omega)$ and $\boldG\in \Leb{p'}(\Omega;\mathbb{R}^2)$, there exists a unique weak solution $v\in \oSob{1}{p'}(\Omega)$ of \eqref{eq:divergence-type}. Moreover we have 
\[      \norm{\nabla v}{\Leb{p'}(\Omega)}+\lambda^{1/2}\norm{v}{\Leb{p'}(\Omega)}\leq C \left[\min \left(1,\lambda^{-1} \right)\norm{g}{\Leb{p'}(\Omega)}+\norm{\boldG}{\Leb{p'}(\Omega)} \right] \]
for some constant $C$ independent of $v$, $g$, $\boldG$, and $\lambda$.
\end{enumerate}
\end{thm}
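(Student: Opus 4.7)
The plan is to follow the three-step scheme already sketched in the introduction: establish solvability for large $\lambda$ via an $\varepsilon$-absorption argument, then set up a Fredholm alternative to reduce the general statement to uniqueness, and finally close the loop via a reduction to non-divergence form combined with Krylov's Aleksandrov-type maximum principle. Part (ii) for the dual problem will be obtained from part (i) by duality.

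For the first step, I would take the Dong--Kim $\Sob{1}{p}$-estimates for the drift-free divergence-form problem as a black box. Moving $\boldb \cdot \nabla u$ to the right-hand side and treating it as an element of $\Sob{-1}{p}(\Omega)$, I apply the $\varepsilon$-inequality \eqref{eq:eps-inequality} of Proposition \ref{prop:Gerhardt}. With $\varepsilon$ small enough the $\varepsilon \norm{\nabla u}{\Leb{p}(\Omega)}$ term is absorbed into the left-hand side, while the lower-order term $C_\varepsilon \norm{u}{\Leb{p}(\Omega)}$ is absorbed by choosing $\lambda \geq \lambda_1$ sufficiently large. This yields solvability in $\oSob{1}{p}(\Omega)$ together with the desired quantitative estimate for all $\lambda \geq \lambda_1$; an entirely parallel argument, with the drift acting in the conjugate form $\Div(v\boldb)$, handles the dual problem \eqref{eq:divergence-type}.

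For the second step, fix such a $\lambda_1$ and consider the solution operator $\mathcal{L}_p + \lambda_1 \mathcal{I}_p : \oSob{1}{p}(\Omega) \to \Sob{-1}{p}(\Omega)$ associated to \eqref{eq:nondivergence-type}, which by Step 1 is an isomorphism. Composing its inverse with the compact embedding $\oSob{1}{p}(\Omega) \hookrightarrow \Leb{p}(\Omega) \hookrightarrow \Sob{-1}{p}(\Omega)$ given by Rellich--Kondrachov yields a compact operator $T$ on $\Leb{p}(\Omega)$. For any $\lambda \geq 0$, the problem \eqref{eq:nondivergence-type} can be rewritten as a Fredholm equation $(I + (\lambda - \lambda_1) T)u = T\tilde{g}$ on $\Leb{p}(\Omega)$, so the Fredholm alternative reduces existence for all $\lambda \geq 0$ to the uniqueness of weak solutions in $\oSob{1}{p}(\Omega)$ of the homogeneous problem. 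For the third step, given a homogeneous weak solution $u \in \oSob{1}{p}(\Omega)$, I use the hypothesis $\Div A \in \Leb{2}(\Omega;\mathbb{R}^2)$ together with interior $\Sob{2}{p}$-regularity to rewrite the equation almost everywhere in non-divergence form as
\[
   -\sum_{i,j=1}^{2} a^{ij} D_{ij} u + (\boldb - \Div A) \cdot \nabla u + \lambda u = 0 \quad \text{in } \Omega.
\]
Since $\boldb - \Div A \in \Leb{2}(\Omega;\mathbb{R}^2) = \Leb{n}(\Omega;\mathbb{R}^n)$ for $n = 2$ and $u$ has vanishing trace, Krylov's Aleksandrov-type maximum principle (Theorem \ref{thm:Alexsandrov}) applies and forces $u \equiv 0$. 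Assertion (ii) then follows from (i) by a duality argument, using that the formal adjoint of the operator associated to \eqref{eq:nondivergence-type} is the one associated to \eqref{eq:divergence-type}.

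The main obstacle is the uniqueness step. The drift lies in $\Leb{2}(\Omega;\mathbb{R}^2)$, which is precisely the scale-critical $\Leb{n}$-class in dimension two, so the classical Aleksandrov--Bakelman--Pucci maximum principle is not directly available; the essential new input is Krylov's recent version valid for drifts in $\Leb{n}$. The additional hypothesis $\Div A \in \Leb{2}(\Omega;\mathbb{R}^2)$ enters exactly here, as it is precisely what is needed to convert the divergence-form equation into the non-divergence-form equation required by that maximum principle. Everything else, by contrast, is either a direct application of the $\varepsilon$-inequality or a standard Fredholm/duality manipulation.
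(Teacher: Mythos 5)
Your overall architecture --- large-$\lambda$ solvability via the $\varepsilon$-inequality of Proposition \ref{prop:Gerhardt} on top of the Dong--Kim estimates, Fredholm reduction to uniqueness of the homogeneous problem, and uniqueness via conversion to non-divergence form plus Krylov's Aleksandrov-type estimate --- is exactly the paper's. The gap is in the uniqueness step, where you invoke ``interior $\Sob{2}{p}$-regularity'' as though it were an off-the-shelf fact. It is not, and it cannot hold as you state it: for $u\in\oSob{1}{p}(\Omega)$ with $p>2$ and $\boldb\in\Leb{2}(\Omega;\mathbb{R}^2)$, H\"older only gives $\boldb\cdot\nabla u\in\Leb{2p/(p+2)}(\Omega)$ with $2p/(p+2)<2$, so at best one can hope for $u\in\Sobloc{2}{q}(\Omega)$ for some $q<2$; the leading coefficients are merely of small mean oscillation, so no classical interior Calder\'on--Zygmund estimate applies; and one must in addition arrange $q\geq n_0$, where $n_0<2$ is the non-explicit exponent appearing in Theorem \ref{thm:Alexsandrov}. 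Establishing this regularity is the crux of the paper's uniqueness proof, not a routine preliminary.

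Concretely, the paper fixes $\Omega'\Subset\Omega$ and a cutoff $\zeta$, shows that $\zeta u$ solves a divergence-form problem on $\Omega'$ with a right-hand side $g\in\Leb{2}(\Omega')$ (this is one of the two places where $\Div A\in\Leb{2}$ is used), then produces a strong solution $w\in\oSob{1}{q}(\Omega')\cap\Sob{2}{q}(\Omega')$ of the corresponding non-divergence problem with drift $\boldb-\Div A\in\Leb{2}$ by Krylov's $\Sob{2}{q}$-solvability theorem (Theorem \ref{thm:Krylov21}), checks that $w$ is also a weak solution of the divergence-form problem, and finally identifies $w=\zeta u$ by the uniqueness for large $\lambda$ from Theorem \ref{thm:large-lambda}. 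Only after this matching argument does $u\in\Sobloc{2}{q}(\Omega)\cap C(\overline\Omega)$ hold and Theorem \ref{thm:Alexsandrov} become applicable. Your sketch omits both Theorem \ref{thm:Krylov21} and the identification step, so as written the application of the maximum principle is unjustified. The remaining steps (large $\lambda$, where the paper uses a Banach fixed-point iteration to upgrade the absorption estimate to existence; the Fredholm reduction; the $\lambda$-uniform constant on $[0,\lambda_1]$; and deducing (ii) from (i) via the equality of kernel dimensions) match the paper and are fine as sketched.
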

\begin{rem*}
(i) Theorem \ref{thm:main-theorem} complements   the result of Kim-Kim \cite{KK15} when $a^{ij}=\delta^{ij}$ and    $\boldb \in \Leb{2}(\Omega;\mathbb{R}^2)$, where $\delta^{ij}$ is the Kronecker delta.  

(ii) The range $2<p<\infty$ in Theorem \ref{thm:main-theorem} is optimal. Define 
\[ a^{ij}=\delta^{ij},\quad  u(x)= \ln |\ln |x|| ,\quad\text{and}\quad \boldb(x)=-\frac{x}{|x|^2\ln |x|}. \]
Then $\boldb \in \Leb{2}(B_{1/e};\mathbb{R}^2)$ and $u\in \oSob{1}{2}(B_{1/e})$ is a nontrivial weak solution satisfying the problem \eqref{eq:nondivergence-type} with $\lambda =0$, $f=0$, and $\boldF=\mathbf{0}$. Hence the uniqueness of weak solutions in $\oSob{1}{2}(\Omega)$ of the problem  \eqref{eq:nondivergence-type} fails in general. More related examples can be found in Filonov \cite{F13} and Filonov-Shilkin \cite{FS18}. 

(iii) In Kang-Kim \cite[Theorem 2.5]{KK17}, they obtained $\Sob{1}{p}$-estimates for the problems \eqref{eq:nondivergence-type} and \eqref{eq:divergence-type} when $a^{ij}$ has small mean oscillations in small balls and $\boldb \in \Leb{q}(\Omega;\mathbb{R}^2)$ with $q>2$.  It seems to be open whether we can remove additional assumption $\Div A \in \Leb{2}(\Omega;\mathbb{R}^2)$ when $\boldb \in \Leb{2}(\Omega;\mathbb{R}^2)$.  
\end{rem*}

\section{Solvability of the problems \eqref{eq:nondivergence-type} and \eqref{eq:divergence-type} for large $\lambda$}\label{sec:large-lambda}
In this section, we obtain  $\Sob{1}{p}$-estimates for the problems \eqref{eq:nondivergence-type} and \eqref{eq:divergence-type} for sufficiently large $\lambda$. 

We first show basic estimates for the drift terms and the $\varepsilon$-inequalities inspired by Gerhardt \cite{G79}, which play crucial roles in the proof of the main theorem of this paper. 
\begin{prop}\label{prop:Gerhardt}
Let $\Omega$ be a bounded Lipschitz  domain in $\mathbb{R}^n$, $n\geq 2$, and $n'<p<\infty$. Suppose that $\boldb \in \Leb{n}(\Omega;\mathbb{R}^n)$. Then there exists a constant $C=C(n,p,\Omega)>0$ such that 
\begin{equation}\label{eq:basic-estimate}
   \int_\Omega |(v\boldb) \cdot \nabla u|\myd{x}\leq C\norm{\boldb}{\Leb{n}(\Omega)}\norm{u}{\Sob{1}{p}(\Omega)}\norm{v}{\Sob{1}{p'}(\Omega)}
\end{equation}
for all $u\in \Sob{1}{p}(\Omega)$ and $v\in \Sob{1}{p'}(\Omega)$. 

For each $\varepsilon>0$, there exists a constant $C_\varepsilon=C_\varepsilon(\varepsilon,n,p,\boldb,\Omega)>0$ such that 
\begin{equation}\label{eq:bdotv-e-inequality} 
\boldb \cdot \nabla u \in \Sob{-1}{p}(\Omega)\quad \text{and}\quad  \norm{\boldb\cdot \nabla u}{\Sob{-1}{p}(\Omega)}\leq \varepsilon \norm{u}{\Sob{1}{p}(\Omega)}+C_{\varepsilon}\norm{u}{\Leb{p}(\Omega)} 
\end{equation}
for all $u\in \Sob{1}{p}(\Omega)$.  Similarly, for each $\varepsilon>0$, there exists a constant $C_\varepsilon^*=C_\varepsilon^*(\varepsilon,n,p,\boldb,\Omega)>0$ such that 
\begin{equation}\label{eq:div-e-inequality} 
\Div(v\boldb)\in \Sob{-1}{p'}(\Omega)\quad \text{and}\quad  \norm{\Div(v\boldb)}{\Sob{-1}{p'}(\Omega)}\leq \varepsilon \norm{v}{\Sob{1}{p'}(\Omega)}+C_{\varepsilon}^*\norm{v}{\Leb{p'}(\Omega)}
\end{equation}
for all $v\in \Sob{1}{p'}(\Omega)$.
\end{prop}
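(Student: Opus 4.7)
The plan is to reduce everything to H\"older's inequality together with a smooth truncation/approximation of $\boldb$. For the basic estimate \eqref{eq:basic-estimate}, first observe that the hypothesis $n'<p<\infty$ forces $p'<n$, so the Sobolev conjugate $(p')^{*}:=np'/(n-p')$ is a finite exponent and a direct computation gives the key identity $1/n+1/(p')^{*}+1/p=1$. Applying H\"older's inequality with these three exponents to the three factors $|\boldb|$, $|v|$, $|\nabla u|$ gives
\begin{equation*}
\int_\Omega |v\boldb\cdot \nabla u|\myd{x}\leq \norm{\boldb}{\Leb{n}(\Omega)}\norm{v}{\Leb{(p')^{*}}(\Omega)}\norm{\nabla u}{\Leb{p}(\Omega)},
\end{equation*}
and the Sobolev embedding $\Sob{1}{p'}(\Omega)\hookrightarrow \Leb{(p')^{*}}(\Omega)$, valid on the bounded Lipschitz domain $\Omega$, yields \eqref{eq:basic-estimate}.

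For the $\varepsilon$-inequality \eqref{eq:bdotv-e-inequality}, the main idea is the standard decomposition $\boldb=\boldb^{(1)}+\boldb^{(2)}$ with $\boldb^{(1)}\in C_c^\infty(\Omega;\mathbb{R}^n)$ and $\norm{\boldb^{(2)}}{\Leb{n}(\Omega)}$ as small as we wish, available by density of $C_c^\infty(\Omega;\mathbb{R}^n)$ in $\Leb{n}(\Omega;\mathbb{R}^n)$. Test against $\phi\in C_c^\infty(\Omega)$. The rough piece is handled by \eqref{eq:basic-estimate} with $\phi$ in place of $v$, yielding
\begin{equation*}
\left|\int_\Omega \phi\,\boldb^{(2)}\cdot \nabla u\myd{x}\right|\leq C\norm{\boldb^{(2)}}{\Leb{n}(\Omega)}\norm{u}{\Sob{1}{p}(\Omega)}\norm{\phi}{\Sob{1}{p'}(\Omega)}.
\end{equation*}
For the smooth piece, the compact support of $\phi\,\boldb^{(1)}$ inside $\Omega$ legitimizes integration by parts, giving
\begin{equation*}
\int_\Omega \phi\,\boldb^{(1)}\cdot \nabla u\myd{x}=-\int_\Omega u\phi\Div\boldb^{(1)}\myd{x}-\int_\Omega u\,\boldb^{(1)}\cdot \nabla \phi\myd{x},
\end{equation*}
so this contribution is bounded by $C_{\boldb^{(1)}}\norm{u}{\Leb{p}(\Omega)}\norm{\phi}{\Sob{1}{p'}(\Omega)}$ via the uniform bounds on $\boldb^{(1)}$ and $\Div\boldb^{(1)}$. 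Choosing $\boldb^{(2)}$ so that $C\norm{\boldb^{(2)}}{\Leb{n}(\Omega)}<\varepsilon$ and extending the resulting inequality by density from $\phi\in C_c^\infty(\Omega)$ to $\phi\in \oSob{1}{p'}(\Omega)$ yields \eqref{eq:bdotv-e-inequality}.

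The dual inequality \eqref{eq:div-e-inequality} follows analogously. For $\phi\in C_c^\infty(\Omega)$ we have
\begin{equation*}
\langle \Div(v\boldb),\phi\rangle=-\int_\Omega v\,\boldb\cdot \nabla \phi\myd{x},
\end{equation*}
and the same decomposition $\boldb=\boldb^{(1)}+\boldb^{(2)}$ is used. The rough piece is controlled by \eqref{eq:basic-estimate} (with the roles of $u$ and $v$ interchanged) by a small multiple of $\norm{v}{\Sob{1}{p'}(\Omega)}\norm{\phi}{\Sob{1}{p}(\Omega)}$. The smooth piece now needs no integration by parts at all: the direct bound $|\int_\Omega v\,\boldb^{(1)}\cdot \nabla \phi\myd{x}|\leq \norm{\boldb^{(1)}}{\Leb{\infty}(\Omega)}\norm{v}{\Leb{p'}(\Omega)}\norm{\nabla \phi}{\Leb{p}(\Omega)}$ already produces an $\Leb{p'}$-norm of $v$. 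I do not anticipate a serious obstacle; the only bookkeeping point is to verify the exponent identity $1/p'-1/n=1/n'-1/p$ underlying \eqref{eq:basic-estimate}, and the essential structural observation is that the smooth approximation step cannot be bypassed, because an arbitrary $\Leb{n}$-drift need not have a locally integrable divergence, which is precisely what forces the trade between a small $\Sob{1}{p}$-seminorm and a large $\Leb{p}$-seminorm of $u$ on the right-hand side.
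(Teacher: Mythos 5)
Your proposal is correct and follows essentially the same route as the paper: H\"older with the exponent triple $(n,(p')^{*},p)$ plus the Sobolev embedding for \eqref{eq:basic-estimate}, then the decomposition of $\boldb$ into a smooth compactly supported part (handled by integration by parts, or directly in the dual case) and an $\Leb{n}$-small remainder (handled by \eqref{eq:basic-estimate}) for the two $\varepsilon$-inequalities. The only cosmetic difference is that the paper justifies the integration-by-parts identity by first proving it for $u\in C^\infty(\overline{\Omega})$ and passing to the limit via density in $\Sob{1}{p}(\Omega)$, whereas you invoke the compact support of $\phi\,\boldb^{(1)}$ directly, which is equally valid.
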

\begin{proof}
By H\"older's inequality and Sobolev's embedding theorem, we have 
\begin{align*} 
  \int_\Omega |(v\boldb)\cdot \nabla u|\myd{x}&\leq \norm{\boldb}{\Leb{n}(\Omega)}\norm{\nabla u}{\Leb{p}(\Omega)}\norm{v}{\Leb{(p')^*}(\Omega)} \\
  &\leq C(n,p,\Omega) \norm{\boldb}{\Leb{n}(\Omega)}\norm{u}{\Sob{1}{p}(\Omega)}\norm{v}{\Sob{1}{p'}(\Omega)}.
  \end{align*}
  for all $u\in \Sob{1}{p}(\Omega)$ and $v\in \Sob{1}{p'}(\Omega)$. This proves \eqref{eq:basic-estimate}. 
  
Note that $\boldb \cdot \nabla u\in \Sob{-1}{p}(\Omega)$ by the estimate \eqref{eq:basic-estimate}. To show $\varepsilon$-inequality \eqref{eq:bdotv-e-inequality}, note that integration by part shows that the identity
 \begin{equation}\label{eq:integration-by-part}
 \int_\Omega (\boldb \cdot \nabla u) v dx=-\int_\Omega (\boldb \cdot \nabla v)u \myd{x}-\int_\Omega (\Div \boldb) uv \myd{x}
 \end{equation} 
 holds for any $\boldb \in C^\infty_c(\Omega;\mathbb{R}^n)$, $u\in C^\infty(\overline{\Omega})$, and $v\in C_c^\infty(\Omega)$. Since $C^\infty(\overline{\Omega})$ is dense in $\Sob{1}{p}(\Omega)$ (see \cite[Theorem 4.3]{EG}) and the estimate \eqref{eq:basic-estimate} holds, a standard density argument shows that identity \eqref{eq:integration-by-part} holds for any $\boldb \in C_c^\infty({\Omega};\mathbb{R}^n)$, $u\in \Sob{1}{p}(\Omega)$, and $v\in C_c^\infty(\Omega)$. 
  
Let $\varepsilon>0$ be given. Since $C^\infty_c(\Omega;\mathbb{R}^n)$ is dense in $\Leb{n}(\Omega;\mathbb{R}^n)$, there exists $\boldb_{\varepsilon} \in C^\infty_c(\Omega;\mathbb{R}^n)$ such that 
\[   \norm{\boldb_{\varepsilon}-\boldb}{\Leb{n}(\Omega)}<{\varepsilon}/{C},\]
where $C$ is the same constant in \eqref{eq:basic-estimate}. Fix $v\in C_c^\infty(\Omega)$. Then by \eqref{eq:integration-by-part}, we have
\begin{align*}
\int_\Omega (\boldb\cdot \nabla u)v\myd{x}&=\int_\Omega [(\boldb-\boldb_{\varepsilon})\cdot \nabla u] v \myd{x}+\int_\Omega (\boldb_\varepsilon\cdot \nabla u)v dx \\
&=\int_\Omega [(\boldb-\boldb_\varepsilon)\cdot \nabla u] v \myd{x}-\int_\Omega (\boldb_\varepsilon \cdot \nabla v) u dx-\int_\Omega (\Div \boldb_\varepsilon)uv \myd{x}.
\end{align*}
By \eqref{eq:basic-estimate} and H\"older's inequality, we get 
\begin{align*}
\int_\Omega |(\boldb \cdot \nabla u)v |\myd{x}&\leq C\norm{\boldb-\boldb_{\varepsilon}}{\Leb{n}(\Omega)}\norm{u}{\Sob{1}{p}(\Omega)}\norm{v}{\Sob{1}{p'}(\Omega)}\\
&\relphantom{=}  +\norm{\boldb_\varepsilon}{\Leb{\infty}(\Omega)}\norm{u}{\Leb{p}(\Omega)}\norm{v}{\Leb{p'}(\Omega)}+\norm{\Div \boldb_\varepsilon}{\Leb{\infty}(\Omega)}\norm{u}{\Leb{p}(\Omega)}\norm{v}{\Leb{p'}(\Omega)}\\
&\leq \left(\varepsilon \norm{u}{\Sob{1}{p}(\Omega)}+C_\varepsilon \norm{u}{\Leb{p}(\Omega)}\right)\norm{v}{\Sob{1}{p'}(\Omega)},
\end{align*}
where $C_\varepsilon = \norm{\boldb_{\varepsilon}}{\Leb{\infty}(\Omega)}+\norm{\Div \boldb_{\varepsilon}}{\Leb{\infty}(\Omega)}$. Since $v\in C_c^\infty(\Omega)$ was arbitrary chosen, this implies that 
\[  \norm{\boldb\cdot \nabla u}{\Sob{-1}{p}(\Omega)}\leq \varepsilon \norm{u}{\Sob{1}{p}(\Omega)}+C_\varepsilon \norm{u}{\Leb{p}(\Omega)},\]
which proves \eqref{eq:bdotv-e-inequality}. 

To show \eqref{eq:div-e-inequality}, suppose that $u\in C^\infty_c(\Omega)$ and $v\in \Sob{1}{p'}(\Omega)$. Since
\[  \int_\Omega (v\boldb)\cdot \nabla u \myd{x}=\int_\Omega [v(\boldb-\boldb_\varepsilon)]\cdot \nabla u \myd{x}+\int_\Omega (v\boldb_{\varepsilon }) \cdot \nabla u \myd{x},
 \]
it follows from \eqref{eq:basic-estimate} and H\"older's inequality that 
 \begin{equation}\label{eq:e-inequality}
    \int_\Omega |(v\boldb)\cdot \nabla u| \myd{x}\leq (\varepsilon \norm{v}{\Sob{1}{p'}(\Omega)}+\norm{\boldb_\varepsilon}{\Leb{\infty}(\Omega)}\norm{v}{\Leb{p'}(\Omega)})\norm{u}{\Sob{1}{p}(\Omega)}.  
\end{equation}
 Since 
 \[  \action{\Div(v\boldb),u}=-\int_\Omega (v\boldb)\cdot \nabla u \myd{x} \]
 for all $u\in C_c^\infty(\Omega)$, it follows from \eqref{eq:e-inequality} that $\Div (v\boldb)\in \Sob{-1}{p'}(\Omega)$ and the estimate  \eqref{eq:div-e-inequality} holds. This completes the proof of Proposition \ref{prop:Gerhardt}. 
\end{proof}

We use the following special case of Dong-Kim \cite[Theorem 7]{DK11-3}. 
\begin{thm}\label{thm:no-first-order}
Let $1<p<\infty$ and $\Omega$ be a bounded domain in $\mathbb{R}^n$, $n\geq 2$. Then there exist $\gamma=\gamma(n,p,\delta)>0$, $\theta=\theta(n,p,\delta)>0$, and $\lambda_0=\lambda_0(n,p,\delta,R_0)\geq 1$ such that under Assumptions \ref{assump:leading-coefficient} $(\gamma)$ and \ref{assump:domain-regularity} $(\theta)$, the following  holds for any $\lambda \geq \lambda_0$:  for any $f\in \Leb{p}(\Omega;\mathbb{R}^n)$ and $\boldF \in \Leb{p}(\Omega;\mathbb{R}^n)$, there exists a unique weak solution $u\in \oSob{1}{p}(\Omega)$ such that 
\[    -\Div(A \nabla u) +\lambda u=f+\Div \boldF\quad \text{in } \Omega. \]
Moreover  we have 
\[   \lambda^{1/2}\norm{u}{\Leb{p}(\Omega)}+\norm{\nabla u}{\Leb{p}(\Omega)}\leq C\left(\lambda^{-1/2}\norm{f}{\Leb{p}(\Omega)}+\norm{\boldF}{\Leb{p}(\Omega)} \right)  \]
for some constant $C=C(n,p,\delta,R_0,\Omega)>0$. 
\end{thm}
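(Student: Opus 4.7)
The plan is to prove the a priori $W^{1,p}$ estimate first, and then deduce existence by the method of continuity; uniqueness is immediate from the a priori estimate applied to the difference of two solutions.

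To eliminate the zero-th order term and extract the correct $\lambda$-scaling, I would enlarge the dimension by one via the Krylov-type trick: set $\tilde{u}(x,y)=u(x)\cos(\sqrt{\lambda}\,y)$ on $\Omega\times\mathbb{R}$, so that $\tilde{u}$ satisfies
\[
-\Div_{x,y}(\tilde{A}\,\nabla\tilde{u})=\tilde{f}+\Div_{x,y}\tilde{\boldF}
\]
with $\tilde{A}=\mathrm{diag}(A,1)$ carrying the same ellipticity constant and BMO bound, and $\tilde{f},\tilde{\boldF}$ built from $f,\boldF$ and $\cos(\sqrt{\lambda}\,y)$. A $W^{1,p}$ estimate for pure second-order equations, applied to $\tilde{u}$ on a slab $\Omega\times(0,\pi/(2\sqrt{\lambda}))$, and then averaged over the $y$-variable, produces the claimed estimate with the $\lambda^{\pm 1/2}$ factors. (Alternatively, one can carry the $\lambda u$ term through the whole argument directly; the dimension-lift is cleaner.)

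For the reduced pure second-order equation $-\Div(A\nabla u)=f+\Div\boldF$ on $\Omega$, the $W^{1,p}$ a priori estimate is obtained by the mean-oscillation / sharp-function technique of Krylov, as adapted by Dong--Kim to small BMO coefficients on Lipschitz domains with small Lipschitz constant. The three main ingredients are: (i) interior and boundary $C^{1,\alpha}$ estimates for the frozen-coefficient homogeneous equation $-\Div(A_0\nabla w)=0$, after flattening the boundary by a bi-Lipschitz map whose Jacobian is $C\theta$-close to the identity; (ii) a perturbation replacing $A$ by its ball average $A_0$, whose error in $L^p$ is controlled by Assumption~\ref{assump:leading-coefficient}$(\gamma)$; and (iii) the Fefferman--Stein inequality, which converts a pointwise bound on the sharp maximal function of $\nabla u$ into an $L^p$ bound for $\norm{\nabla u}{\Leb{p}(\Omega)}$. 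Smallness of $\gamma$ and $\theta$ is used to absorb the perturbation on the left-hand side.

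With the uniform a priori estimate in hand for $\lambda\ge\lambda_0$, I would deduce solvability in $\oSob{1}{p}(\Omega)$ by the method of continuity along $A_t:=tA+(1-t)I$: each $A_t$ satisfies the hypotheses with the same constants, the a priori estimate holds uniformly in $t$, and at $t=0$ solvability for $-\Delta u+\lambda u=f+\Div\boldF$ on a Lipschitz domain is classical (Lax--Milgram for $p=2$, combined with duality and Marcinkiewicz interpolation, or direct Calder\'on--Zygmund estimates for the Laplacian lifted from a half-space). The main obstacle is the pointwise sharp-maximal-function estimate in step (ii): decomposing $u$ on each small ball as a solution of the frozen-coefficient equation plus a small corrector whose gradient is controlled by the mean oscillation of $A$ and the data, and summing these contributions across scales. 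This is where the quantitative smallness of $\gamma$ genuinely enters, and is the technical heart of the Dong--Kim machinery.
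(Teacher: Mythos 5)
The paper offers no proof of this statement at all: Theorem \ref{thm:no-first-order} is imported verbatim as a special case of Dong--Kim \cite{DK11-3}, Theorem 7, so there is no in-paper argument to compare against. Your outline is essentially a reconstruction of the proof in that reference --- the mean-oscillation/sharp-function method with the Fefferman--Stein inequality, smallness of $\gamma$ and $\theta$ to absorb the perturbation, Agmon's dimension-lift for the $\lambda$-scaling, and the method of continuity for existence --- and this is the right architecture; I see no step that would fail. Two places where your sketch is more glib than the actual argument deserve flagging. First, in the lift $\tilde u(x,y)=u(x)\cos(\sqrt{\lambda}\,y)$ the cosine vanishes, so one cannot simply ``divide back''; the standard remedy is to integrate the slab estimate over a $y$-interval on which $|\cos(\sqrt{\lambda}\,y)|\ge 1/2$, to extract the $\lambda^{1/2}\norm{u}{\Leb{p}(\Omega)}$ term from $\partial_y\tilde u=-\sqrt{\lambda}\,u\sin(\sqrt{\lambda}\,y)$, and to obtain the $\lambda^{-1/2}\norm{f}{\Leb{p}(\Omega)}$ factor by rewriting $f\cos(\sqrt{\lambda}\,y)=\partial_y\bigl(\lambda^{-1/2}f\sin(\sqrt{\lambda}\,y)\bigr)$, i.e.\ absorbing $f$ into the divergence part of the lifted data. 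Second, the endpoint $t=0$ of your continuity path is \emph{not} classical for all $1<p<\infty$ on a general Lipschitz domain --- the Jerison--Kenig theory restricts $p$ to a window around $2$ --- so you must either invoke the small Lipschitz constant $\theta$ once more at the endpoint (running the same sharp-function machinery for the Laplacian) or start from the $p=2$ Lax--Milgram solution and upgrade its regularity using the a priori estimate and density of smooth data; either fix is routine but needs to be stated, since otherwise the existence half of the argument has a gap.
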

\begin{rem*}
Let $\Omega$ be a bounded  domain in $\mathbb{R}^n$, $n\geq 2$, and $1<p<\infty$. If $f_0 \in \Sob{-1}{p}(\Omega)$, then there exists $\boldF_0 \in \Leb{p}(\Omega;\mathbb{R}^n)$ such that 
\[   \Div\boldF_0=f_0\quad \text{in } \Omega \quad \text{and}\quad   \norm{\boldF_0}{\Leb{p}(\Omega)}\leq C(n,p,\Omega)\norm{f_0}{\Sob{-1}{p}(\Omega)},  \]
(see e.g. \cite[Lemma 3.9]{KT20}). By Theorem \ref{thm:no-first-order}, there exist  $\gamma=\gamma(n,p,\delta)>0$, $\theta=\theta(n,p,\delta,R_0)>0$, and $\lambda_0=\lambda_0(n,p,\delta,R_0)\geq 1$ such that under Assumptions \ref{assump:leading-coefficient} $(\gamma)$ and \ref{assump:domain-regularity} $(\theta)$, we have for $\lambda \geq \lambda_0$ and for each $f\in \Leb{p}(\Omega)$, there exists a unique weak solution $u\in \oSob{1}{p}(\Omega)$ such that 
\[   \int_\Omega (A \nabla u) \cdot \nabla \phi \myd{x}+\lambda \int_\Omega u \phi \myd{x}=\int_\Omega f \phi \myd{x}-\int_\Omega \boldF_0\cdot \nabla \phi \myd{x}\]
for all $\phi \in C_c^\infty(\Omega)$. Moreover  we have 
\begin{equation}\label{eq:a-priori-negative-space}
\begin{aligned}
\lambda^{1/2}\norm{u}{\Leb{p}(\Omega)}+\norm{\nabla u}{\Leb{p}(\Omega)}&\leq C(\lambda^{-1/2}\norm{f}{\Leb{p}(\Omega)}+\norm{\boldF_0}{\Leb{p}(\Omega)})\\
   &\leq C(\lambda^{-1/2}\norm{f}{\Leb{p}(\Omega)}+\norm{f_0}{\Sob{-1}{p}(\Omega)} )
\end{aligned}
\end{equation}
for some constant $C=C(n,p,\delta,R_0,\Omega)>0$. 
\end{rem*}

Now we present the main theorem of this section. From now on, we mainly focus on the case $\boldb \in \Leb{2}(\Omega;\mathbb{R}^2)$ since other cases $\boldb \in \Leb{q}(\Omega;\mathbb{R}^n)$ are already considered in Kang-Kim \cite{KK17} when $n\leq q<\infty$ if $n\geq 3$ and $2<q<\infty$ if $n=2$. 
 \begin{thm}\label{thm:large-lambda}
Let  $2<p<\infty$ and $\Omega$ be a bounded domain in $\mathbb{R}^2$. Suppose that $A$ satisfies \eqref{eq:uniformly-elliptic} and $\boldb \in \Leb{2}(\Omega;\mathbb{R}^2)$. Then there exist $\gamma=\gamma(p,\delta)>0$, $\theta=\theta(p,\delta)>0$, and $\lambda_1=\lambda_1(p,\delta,R_0,\Omega,\boldb)\geq 1$ such that under Assumptions \ref{assump:leading-coefficient} $(\gamma)$ and \ref{assump:domain-regularity} $(\theta)$, the following  results hold for any $\lambda \geq \lambda_1$: 
 \begin{enumerate}
     \item[\rm (i)] If $f\in \Leb{p}(\Omega)$ and $\boldF \in \Leb{p}(\Omega;\mathbb{R}^2)$, then there exists a unique weak solution  $u\in \oSob{1}{p}(\Omega)$ of \eqref{eq:nondivergence-type}. Moreover  
\[   \norm{\nabla u}{\Leb{p}(\Omega)}+\lambda^{1/2}\norm{u}{\Leb{p}(\Omega)}\leq C  \left[\lambda^{-1/2}\norm{f}{\Leb{p}(\Omega)}+\norm{\boldF}{\Leb{p}(\Omega)} \right]\]
for some constant $C=C(p,\delta,R_0,\Omega)>0$. 
     \item[\rm (ii)] If $g\in \Leb{p'}(\Omega)$ and $\boldG \in \Leb{p'}(\Omega;\mathbb{R}^2)$, then there exists a unique weak solution   $v\in \oSob{1}{p'}(\Omega)$ of \eqref{eq:divergence-type}. Moreover  
\[   \norm{\nabla v}{\Leb{p'}(\Omega)}+\lambda^{1/2}\norm{v}{\Leb{p'}(\Omega)}\leq C  \left[\lambda^{-1/2}\norm{g}{\Leb{p'}(\Omega)}+\norm{\boldG}{\Leb{p'}(\Omega)} \right]\]
for some constant $C=C(p,\delta,R_0,\Omega)>0$. 
 \end{enumerate}
 \end{thm}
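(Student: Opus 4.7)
The strategy is to treat the first-order term $\boldb\cdot\nabla u$ as a perturbation lying in $\Sob{-1}{p}(\Omega)$, reducing solvability to the drift-free base result Theorem~\ref{thm:no-first-order} (in the $\Sob{-1}{p}$-valued form stated in its remark). The $\varepsilon$-inequality from Proposition~\ref{prop:Gerhardt} supplies the small parameter needed to absorb the perturbation, while the $\lambda^{-1/2}$ gain in \eqref{eq:a-priori-negative-space} kills the remaining $\Leb{p}$-tail once $\lambda$ is sufficiently large.

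For part (i), I would fix $\gamma,\theta$ as given by Theorem~\ref{thm:no-first-order} (with $n=2$) and run the method of continuity along the family
\[
  L_\tau u := -\Div(A\nabla u)+\tau\,\boldb\cdot\nabla u+\lambda u,\qquad \tau\in[0,1],
\]
viewed as bounded linear operators $\oSob{1}{p}(\Omega)\to\Sob{-1}{p}(\Omega)$; continuity in $\tau$ is immediate from \eqref{eq:basic-estimate}. For any solution of $L_\tau u=f+\Div\boldF$, rewrite the equation as $-\Div(A\nabla u)+\lambda u=f+\Div\boldF-\tau\,\boldb\cdot\nabla u$ and apply \eqref{eq:a-priori-negative-space} with $f_0=-\tau\,\boldb\cdot\nabla u$ to get
\begin{equation*}
  \norm{\nabla u}{\Leb{p}(\Omega)}+\lambda^{1/2}\norm{u}{\Leb{p}(\Omega)}\le C\bigl(\lambda^{-1/2}\norm{f}{\Leb{p}(\Omega)}+\norm{\boldF}{\Leb{p}(\Omega)}+\norm{\boldb\cdot\nabla u}{\Sob{-1}{p}(\Omega)}\bigr).
\end{equation*}
Substituting \eqref{eq:bdotv-e-inequality} with $\varepsilon$ chosen so that $C\varepsilon\le 1/2$ absorbs $C\varepsilon\norm{\nabla u}{\Leb{p}(\Omega)}$ into the left side; the surviving lower-order term $C(\varepsilon+C_\varepsilon)\norm{u}{\Leb{p}(\Omega)}$ is then dominated by $\tfrac12\lambda^{1/2}\norm{u}{\Leb{p}(\Omega)}$ whenever $\lambda\ge\lambda_1$, for some $\lambda_1=\lambda_1(p,\delta,R_0,\Omega,\boldb)$. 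The resulting a priori estimate is uniform in $\tau\in[0,1]$, so the method of continuity transfers bijectivity of $L_0$ (given by Theorem~\ref{thm:no-first-order} and its remark) to $L_1$, yielding existence and the quantitative estimate; uniqueness is the special case $f=0$, $\boldF=0$.

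Part (ii) is proved by the identical scheme applied to the transposed coefficient matrix $A^T$, which also satisfies \eqref{eq:uniformly-elliptic} and Assumption~\ref{assump:leading-coefficient} with the same constants, working in the conjugate exponent $p'\in(1,2)\subset(1,\infty)$, which is a legitimate range for Theorem~\ref{thm:no-first-order}. Concretely, one perturbs around $-\Div(A^T\nabla v)+\lambda v=g+\Div\boldG+\tau\,\Div(v\boldb)$ and uses the divergence-type $\varepsilon$-inequality \eqref{eq:div-e-inequality} in place of \eqref{eq:bdotv-e-inequality} to absorb the drift. The only delicate point throughout is the nested choice of constants: $\varepsilon$ is pinned to the base constant $C$ of \eqref{eq:a-priori-negative-space}, and $\lambda_1$ must then be taken large enough to swallow the $\boldb$- and $\Omega$-dependent $C_\varepsilon$, which is exactly why $\lambda_1$ inherits its dependencies. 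No tool beyond Proposition~\ref{prop:Gerhardt}, Theorem~\ref{thm:no-first-order}, and bookkeeping is needed.
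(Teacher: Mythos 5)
Your argument is correct and matches the paper's proof in all essentials: both treat $\boldb\cdot\nabla u$ (resp.\ $\Div(v\boldb)$) as a $\Sob{-1}{p}$-perturbation of the drift-free problem, invoke the remark after Theorem~\ref{thm:no-first-order} for the base estimate \eqref{eq:a-priori-negative-space}, and absorb the perturbation via the $\varepsilon$-inequalities of Proposition~\ref{prop:Gerhardt} together with a choice of $\lambda_1$ large enough to dominate $C_\varepsilon$. The only difference is the functional-analytic wrapper: the paper packages the same a priori estimate as a contraction for the solution map $u\mapsto\mathcal{T}(u)$ and applies the Banach fixed point theorem, whereas you run the method of continuity in $\tau$; the two devices are interchangeable here.
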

 \begin{proof}
 Let $f\in \Leb{p}(\Omega)$ and $\boldF \in \Leb{p}(\Omega;\mathbb{R}^2)$. Then $\Div \boldF \in \Sob{-1}{p}(\Omega)$. For each $u\in \oSob{1}{p}(\Omega)$, it follows from Proposition \ref{prop:Gerhardt} that $\boldb \cdot \nabla u \in \Sob{-1}{p}(\Omega)$. Moreover, for each $\varepsilon>0$, there exists a constant $C_\varepsilon =C(\varepsilon,p,\boldb,\Omega)>0$ such that 
 \begin{equation}\label{eq:g-inequality}
\norm{\boldb \cdot \nabla u}{\Sob{-1}{p}(\Omega)}\leq \varepsilon \norm{\nabla u}{\Leb{p}(\Omega)}+C_\varepsilon \norm{u}{\Leb{p}(\Omega)}
\end{equation}
for all $u\in \oSob{1}{p}(\Omega)$.   By the remark of Theorem \ref{thm:no-first-order}, there exist $\gamma=\gamma(p,\delta)>0$, $\theta=\theta(p,\delta)>0$, and $\lambda_0=\lambda_0(p,\delta,R_0,\Omega)\geq 1$ such that under Assumptions \ref{assump:leading-coefficient} $(\gamma)$ and \ref{assump:domain-regularity} $(\theta)$, for $\lambda \geq \lambda_0$, there exists a unique $\overline{u}=\mathcal{T}(u)\in \oSob{1}{p}(\Omega)$ satisfying 
\[  \int_\Omega A\nabla \overline{u}\cdot \nabla \phi \myd{x}+\lambda \int_\Omega \overline{u}\phi\myd{x}=\action{\Div \boldF-\boldb \cdot \nabla u,\phi}+\int_\Omega f \phi \myd{x}\]
for all $\phi \in C_c^\infty(\Omega)$. Moreover, we have 
\[   \lambda^{1/2}\norm{\overline{u}}{\Leb{p}(\Omega)}+\norm{\nabla \overline{u}}{\Leb{p}(\Omega)}\leq C_0\left(\lambda^{-1/2}\norm{f}{\Leb{p}(\Omega)}+\norm{\Div \boldF -\boldb \cdot \nabla u}{\Sob{-1}{p}(\Omega)} \right) \]
for some constant $C_0=C_0(p,\delta,R_0,\Omega)>0$.  Choose $\varepsilon>0$ so that $\varepsilon C_0 =1/2$. Then we have 
\begin{align*}
\lambda^{1/2}\norm{\overline{u}}{\Leb{p}(\Omega)}+\norm{\nabla \overline{u}}{\Leb{p}(\Omega)}
&\leq C_0\lambda^{-1/2}\norm{f}{\Leb{p}(\Omega)}+ C_0 \norm{\boldF}{\Leb{p}(\Omega)}\\
&\relphantom{=}+\frac{1}{2}\left(\norm{\nabla u}{\Leb{p}(\Omega)}+C_* \norm{u}{\Leb{p}(\Omega)}\right),
\end{align*}  
where $C_*=C_*(p,\boldb,\delta,R_0,\Omega)>0$. Moreover, we have 
\begin{align*}   
&\lambda^{1/2}\norm{\mathcal{T}(u_1)-\mathcal{T}(u_2)}{\Leb{p}(\Omega)}+\norm{\nabla (\mathcal{T}(u_1)-\mathcal{T}(u_2))}{\Leb{p}(\Omega)}\\
&\leq \frac{1}{2} \left( \norm{\nabla (u_1-u_2)}{\Leb{p}(\Omega)}+C_*\norm{u_1-u_2}{\Leb{p}(\Omega)}\right)
\end{align*}
for $u_1, u_2 \in \oSob{1}{p}(\Omega)$. Let $\lambda_1:=\max (C_*^2,\lambda_0)+1$. Then for $\lambda \geq \lambda_1$, $\mathcal{T}$ is a contraction on $\oSob{1}{p}(\Omega)$ which is a Banach space equipped with the equivalent norm $\norm{\nabla \cdot}{\Leb{p}(\Omega)}+\norm{\cdot }{\Leb{p}(\Omega)}$. Hence by the Banach fixed point theorem, there exists a unique $u\in \oSob{1}{p}(\Omega)$ such that $\mathcal{T}(u)=u$, that is, $u$ is a weak solution of the problem \eqref{eq:nondivergence-type}. Moreover, $u$ satisfies 
\[    \lambda^{1/2} \norm{u}{\Leb{p}(\Omega)}+\norm{u}{\Leb{p}(\Omega)}\leq 2C_0\left(\lambda^{-1/2}\norm{f}{\Leb{p}(\Omega)}+\norm{\boldF}{\Leb{p}(\Omega)} \right). \]
This completes the proof of (i). Following the exactly same argument, one can also prove (ii) whose proof is omitted. This completes the proof of Theorem \ref{thm:large-lambda}. 
 \end{proof}

\section{Uniqueness of weak solutions for the problem \eqref{eq:nondivergence-type}}\label{sec:uniqueness}
This section is devoted to a proof of the uniqueness part of Theorem \ref{thm:main-theorem} (i). Below is the main theorem of this section.
\begin{thm}\label{thm:uniqueness}
Let $\Omega$ be a bounded domain in $\mathbb{R}^2$, $2<p<\infty$. Suppose that $A$ satisfies \eqref{eq:uniformly-elliptic}, $\Div A ,\boldb \in \Leb{2}(\Omega;\mathbb{R}^2)$, and $\lambda \geq 0$.  Then there exists a constant $\gamma=\gamma(p,\delta, \norm{\Div A}{\Leb{2}(\Omega)},\norm{\boldb}{\Leb{2}(\Omega)})$ such that under Assumption \ref{assump:leading-coefficient} $(\gamma)$, if $u\in \oSob{1}{p}(\Omega)$ satisfies 
\begin{equation}\label{eq:zero-solution} 
\int_\Omega A\nabla u \cdot \nabla \phi  +(\boldb\cdot \nabla u +\lambda u)\phi  \myd{x}=0 
\end{equation}
for all $\phi \in C_c^\infty(\Omega)$, then $u$ is identically zero in $\Omega$. 
\end{thm}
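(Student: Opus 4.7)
The plan is to convert the divergence form equation \eqref{eq:zero-solution} into a non-divergence form equation with drift $\tilde{\boldb} := \boldb - \Div A \in \Leb{2}(\Omega;\mathbb{R}^2)$, and then apply Krylov's Alexandrov-type maximum principle (Theorem \ref{thm:Alexsandrov}) to conclude $u \equiv 0$. Since $\Omega\subset \mathbb{R}^2$ and $p>2$, Morrey's embedding gives $u\in C(\overline{\Omega})$ with $u=0$ on $\partial\Omega$, and H\"older's inequality gives $\tilde{\boldb}\cdot\nabla u\in \Leb{r}(\Omega)$ for $r:=2p/(p+2)\in(1,2)$, so that every term in the equation makes sense at least as a distribution.

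To produce the non-divergence form, I would fix $\phi\in C_c^\infty(\Omega)$ and set $\Phi_\varepsilon := \phi\nabla u_\varepsilon$, where $u_\varepsilon = u*\eta_\varepsilon$ is a standard mollification; for small $\varepsilon>0$, $\Phi_\varepsilon\in C_c^\infty(\Omega;\mathbb{R}^2)$. Applying the definition of $\Div A$ to $\Phi_\varepsilon$ and expanding the product rule yields
\[
\int_\Omega a^{ij}\phi\, D_{ij} u_\varepsilon \myd{x} = -\int_\Omega A\nabla u_\varepsilon\cdot\nabla\phi\myd{x} - \int_\Omega (\Div A)\cdot\nabla u_\varepsilon\,\phi\myd{x}.
\]
Letting $\varepsilon\to 0$ defines $a^{ij}D_{ij}u$ as a distribution and, combined with \eqref{eq:zero-solution}, shows
\[
-a^{ij}D_{ij}u + \tilde{\boldb}\cdot\nabla u + \lambda u = 0 \quad \text{in } \Omega
\]
in the sense of distributions. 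Since the right-hand side $-\tilde{\boldb}\cdot\nabla u-\lambda u$ lies in $\Lebloc{r}(\Omega)$, the interior $\Sob{2}{r}$-estimates for non-divergence form equations with coefficients of small mean oscillation (available under Assumption \ref{assump:leading-coefficient} $(\gamma)$ with $\gamma$ small depending on $\delta$ and $r$) upgrade $u$ to $\Sobloc{2}{r}(\Omega)$, so the equation holds a.e.\ in $\Omega$.

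With $u\in \Sobloc{2}{r}(\Omega)\cap C(\overline{\Omega})$ vanishing on $\partial\Omega$ and $\tilde{\boldb}\in \Leb{2}(\Omega;\mathbb{R}^2)$, Theorem \ref{thm:Alexsandrov} applied to $-a^{ij}D_{ij}u + \tilde{\boldb}\cdot\nabla u = -\lambda u$ produces an estimate of the form $\sup_\Omega u \leq C\norm{(-\lambda u)_+}{\Leb{s}(\Gamma_u^+)}$, where $\Gamma_u^+$ denotes the upper contact set. Because $u\geq 0$ on $\Gamma_u^+$, the quantity $(-\lambda u)_+$ vanishes there, forcing $\sup_\Omega u\leq 0$; the same argument applied to $-u$ yields $\inf_\Omega u\geq 0$, so $u\equiv 0$.

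The main obstacle is the mismatch between the regularity extractable from the equation and the typical hypotheses of Alexandrov-type principles: in dimension $2$ with $p>2$, the bootstrap only yields $u\in \Sobloc{2}{r}$ with $r<2=n$, so the proof depends critically on Krylov's refined version that tolerates both regularity below $\Sob{2}{n}$ and a drift in $\Leb{n}$. Concomitantly, the smallness threshold on $\gamma$ must be calibrated against $\norm{\Div A}{\Leb{2}(\Omega)}$ and $\norm{\boldb}{\Leb{2}(\Omega)}$ so that the $\Sob{2}{r}$-regularity step and the Alexandrov constant in Theorem \ref{thm:Alexsandrov} are both under control, which explains the dependence of $\gamma$ on these two norms in the statement.
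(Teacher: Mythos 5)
Your overall strategy --- rewrite \eqref{eq:zero-solution} in non-divergence form with drift $\tilde{\boldb}=\boldb-\Div A$ and invoke Theorem \ref{thm:Alexsandrov} --- is exactly the paper's, and the final step (the ABP estimate with vanishing right-hand side forces $u\leq 0$ and $-u\leq 0$) is fine. The gap is in the regularity upgrade. You assert that ``interior $\Sob{2}{r}$-estimates \dots upgrade $u$ to $\Sobloc{2}{r}(\Omega)$,'' but an a priori estimate bounds second derivatives of functions already known to possess them; it does not by itself show that a $\oSob{1}{p}$ distributional solution has second derivatives. What is actually needed is a solve-and-identify argument: on a smooth subdomain $\Omega'\Subset\Omega$, one solves the non-divergence problem $-a^{ij}D_{ij}w+(\boldb-\Div A)\cdot\nabla w+\lambda_1 w=-g$ in $\oSob{1}{q}(\Omega')\cap\Sob{2}{q}(\Omega')$ via Theorem \ref{thm:Krylov21}, checks (using $\Div A\in\Leb{2}(\Omega;\mathbb{R}^2)$) that $w$ is also a weak solution of the divergence-form problem satisfied by $\zeta u$, and then identifies $w=\zeta u$ through the uniqueness for large $\lambda_1$ supplied by Theorem \ref{thm:large-lambda}; this is where the Sobolev embedding $\oSob{1}{q^*}(\Omega')\subset\oSob{1}{p}(\Omega')$, i.e.\ the requirement $q>2p/(p+2)$, enters so that $w$ lies in the uniqueness class. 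Your proposal never identifies the putative $\Sob{2}{r}$ object with $u$, and that identification is the actual content of the step.

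Second, your exponent $r=2p/(p+2)$ is obtained by treating $\tilde{\boldb}\cdot\nabla u$ as $\Leb{r}$ data, and this caps the bootstrap at $r$: knowing $u\in\Sobloc{2}{r}$ only returns $\nabla u\in\Lebloc{r^*}=\Lebloc{p}$, so there is no further gain. But Theorem \ref{thm:Alexsandrov} requires the integrability exponent to be at least $n_0$, where $n_0\in(1,2)$ depends on $\delta$ and $\norm{\boldb-\Div A}{\Leb{2}(\Omega)}$ and may well exceed $2p/(p+2)$ (for instance when $p$ is close to $2$). The paper circumvents this by keeping the drift inside the operator when invoking Theorem \ref{thm:Krylov21}, whose solvability holds for any exponent in $(1,2)$, so it is free to take $q=\tfrac12\left(\max\{2p/(p+2),n_0\}+2\right)$, which simultaneously exceeds $n_0$ (so the ABP estimate applies) and $2p/(p+2)$ (so the identification works). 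As written, your argument fails whenever $2p/(p+2)<n_0$.
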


To prove Theorem \ref{thm:uniqueness}, we use recent results due to Krylov \cite{krylov2020stochastic,Krylov21}.  To state these results, for $0<\delta<1$, let $\mathbb{S}_\delta$ be the set of $n\times n$ real symmetric matrices which are measurable and whose eigenvalues are in $[\delta,\delta^{-1}]$ and $\boldb :\Omega\rightarrow \mathbb{R}^n$ be a vector field.  Write 
\[   Lu =\sum_{i,j=1}^n a^{ij}D_{ij} u -\boldb \cdot \nabla u. \]
The following theorem can be found in \cite[Corollary 3.1]{krylov2020stochastic}, which generalizes the classical theorem due to Alexsandrov \cite{A63}.

\begin{thm}\label{thm:Alexsandrov}
Let $\Omega$ be a bounded domain in $\mathbb{R}^n$ and $c$ be a nonnegative measurable function on $\Omega$. Suppose that   $0<\delta<1$ and  $A$ is  a $\mathbb{S}_{\delta}$-valued function on $\mathbb{R}^n$ and $\boldb \in \Leb{n}(\Omega;\mathbb{R}^n)$. Then there exists a number $n/2<n_0<n$ depending on $n$, $\delta$, and $\norm{\boldb}{\Leb{n}(\Omega)}$ such that if $n_0\leq p<\infty$, then there exists a constant $C$ depending on $n$, $p$, $\delta$, $\norm{\boldb}{\Leb{n}(\Omega)}$, and the diameter of $\Omega$ such that  
\begin{equation}\label{eq:estimate-Alexsandrov} 
u(x)\leq C\norm{(Lu-cu)_{-}}{\Leb{p}(\Omega)}+\sup_{\partial \Omega} u_+ \quad \text{in } \Omega 
\end{equation}
for all $u\in \Sobloc{2}{p}(\Omega)\cap C(\overline{\Omega})$. 
\end{thm}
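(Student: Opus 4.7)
The plan is to use the hypothesis $\Div A \in \Leb{2}(\Omega;\mathbb{R}^2)$ to rewrite \eqref{eq:zero-solution} in non-divergence form and then to apply the Alexsandrov-type maximum principle of Theorem~\ref{thm:Alexsandrov} to $\pm u$. Setting $\tilde{\boldb}:=\boldb-\Div A\in\Leb{2}(\Omega;\mathbb{R}^2)$, the target is to show that
\[
\sum_{i,j=1}^{2} a^{ij} D_{ij}u = \tilde{\boldb}\cdot\nabla u+\lambda u \quad\text{in } \Omega
\]
distributionally; equivalently, $Lu-cu=0$ with $c=\lambda$ in the notation of Theorem~\ref{thm:Alexsandrov}. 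Since $p>2=n$, Morrey's embedding gives $u\in C(\overline{\Omega})$ with $u=0$ on $\partial\Omega$, so once the non-divergence equation and enough regularity are in hand, $(Lu-cu)_-\equiv 0$ and Theorem~\ref{thm:Alexsandrov} force $u\leq 0$. The symmetric argument applied to $-u$ (a solution of the same homogeneous equation by linearity) gives $u\geq 0$, hence $u\equiv 0$.

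To derive the non-divergence identity, I would mollify: fix $\Omega'\Subset\Omega$ and $\phi\in C_c^\infty(\Omega')$, and set $u^\varepsilon:=u\ast\rho_\varepsilon$ for $\varepsilon<\dist(\Omega',\partial\Omega)$. Applying the definition of $\Div A$ to the smooth test vector field $\Phi^j:=\phi\, D_j u^\varepsilon$ and using the product rule $D_i(\phi D_j u^\varepsilon)=(D_i\phi)D_j u^\varepsilon+\phi D_{ij}u^\varepsilon$ yields
\[
\int_\Omega A\nabla u^\varepsilon\cdot\nabla\phi \myd{x}+\int_\Omega \phi\,(A:D^2 u^\varepsilon) \myd{x} = -\int_\Omega \phi\,(\Div A)\cdot\nabla u^\varepsilon \myd{x}.
\]
Passing $\varepsilon\to 0$---H\"older's inequality controls the $\Div A$ pairing since $\Div A\in\Leb{2}$ and $\nabla u^\varepsilon\to\nabla u$ in $\Leb{p}_{\mathrm{loc}}\subset\Leb{2}_{\mathrm{loc}}$ by $p>2$---and substituting \eqref{eq:zero-solution} for $\int_\Omega A\nabla u\cdot\nabla\phi \myd{x}$ identifies the distributional limit of $\int_\Omega \phi\,(A:D^2 u^\varepsilon) \myd{x}$ with $\int_\Omega \phi\,(\tilde{\boldb}\cdot\nabla u+\lambda u) \myd{x}$, which establishes the stated identity.

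For the regularity needed to feed $u$ into Theorem~\ref{thm:Alexsandrov}, observe that $f:=\tilde{\boldb}\cdot\nabla u+\lambda u\in\Leb{s}(\Omega)$ with $s:=2p/(p+2)\in(1,2)$ by H\"older, so the interior $\Sob{2}{s}$ theory for non-divergence form equations with small BMO leading coefficients (cf.\ Dong--Kim \cite{DK10,DK11-3}) delivers $u\in\Sobloc{2}{s}(\Omega)$ once $\gamma$ is chosen small enough. Then Theorem~\ref{thm:Alexsandrov} with $L,c$ as above and any admissible exponent $q\in[n_0,s]$ yields
\[
u(x)\leq C\norm{(Lu-cu)_-}{\Leb{q}(\Omega)}+\sup_{\partial\Omega}u_+ = 0
\]
in $\Omega$, and the symmetric argument on $-u$ closes the proof. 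The main technical hurdle is ensuring the compatibility $s\geq n_0$, where $n_0=n_0(\delta,\norm{\tilde{\boldb}}{\Leb{2}})\in(1,2)$ is the threshold from Theorem~\ref{thm:Alexsandrov}: since $s\to 1^{+}$ as $p\to 2^{+}$ but $n_0>1$ in general, I expect this to be handled via a bootstrap argument combined with the smallness of $\gamma$, which the theorem permits to depend on $p$, $\delta$, $\norm{\Div A}{\Leb{2}}$, and $\norm{\boldb}{\Leb{2}}$.
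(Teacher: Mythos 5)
There is a fundamental mismatch here: the statement you were asked to prove is Theorem \ref{thm:Alexsandrov} itself, the Aleksandrov--Bakelman--Pucci-type maximum principle for $Lu=\sum a^{ij}D_{ij}u-\boldb\cdot\nabla u$ with $\boldb\in\Leb{n}(\Omega;\mathbb{R}^n)$ and with an integrability exponent $p$ that is allowed to drop below $n$ (down to some $n_0\in(n/2,n)$). Your proposal does not prove this estimate; it \emph{uses} it as a black box to prove a different statement, namely the uniqueness result of Theorem \ref{thm:uniqueness} (that a $\oSob{1}{p}$-solution of the homogeneous problem \eqref{eq:zero-solution} vanishes). Nothing in your argument addresses why \eqref{eq:estimate-Alexsandrov} holds, why the threshold $n_0$ can be taken strictly below $n$, or how the constant depends on $\norm{\boldb}{\Leb{n}(\Omega)}$ and $\mathrm{diam}\,\Omega$. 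As a proof of the stated theorem the proposal is therefore circular/empty. For the record, the paper does not prove this theorem either: it is imported verbatim from Krylov \cite[Corollary 3.1]{krylov2020stochastic}, and a genuine proof would require the machinery of that paper (ABP-type estimates for operators with drift merely in $L_d$), none of which appears in your sketch.

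As a secondary remark: read as an attempted proof of Theorem \ref{thm:uniqueness}, your outline is broadly in the spirit of the paper's Section \ref{sec:uniqueness} (convert to non-divergence form using $\Div A\in\Leb{2}$, upgrade to $\Sobloc{2}{q}$ interior regularity, then apply the maximum principle to $\pm u$). But even there the exponent bookkeeping is not quite right: since $Lu-cu=0$, the term $\norm{(Lu-cu)_-}{\Leb{q}(\Omega)}$ vanishes for \emph{every} $q$, so the constraint is not that the right-hand side $f=\tilde{\boldb}\cdot\nabla u+\lambda u$ lie in $\Leb{q}$ with $q\geq n_0$, but that $u\in\Sobloc{2}{q}(\Omega)$ for some $q\geq n_0$. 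The paper secures this not by mollification and interior $\Sob{2}{s}$ theory for the original equation, but by solving an auxiliary non-divergence problem for $\zeta u$ with $q=\tfrac12\left(\max\{2p/(p+2),n_0\}+2\right)\in(n_0,2)$ via Theorem \ref{thm:Krylov21} and identifying the solution with $\zeta u$ through the uniqueness of Theorem \ref{thm:large-lambda}; your proposed bootstrap to handle $s=2p/(p+2)<n_0$ is left entirely unspecified.
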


The following theorem is a special case of \cite[Theorem 4.2]{Krylov21}. 
\begin{thm}\label{thm:Krylov21}
Let $\Omega$ be a bounded $C^{1,1}$-domain in $\mathbb{R}^n$, $n\geq 2$, $1<p<n$, and $\lambda \geq 0$.  Assume that $A$ satisfies \eqref{eq:uniformly-elliptic} and $\boldb \in \Leb{n}(\Omega;\mathbb{R}^n)$.  Then there exists $\gamma=\gamma(n,p,\delta)>0$ such that under Assumption \ref{assump:leading-coefficient} $(\gamma)$, for every $g\in \Leb{p}(\Omega)$, there exists a unique strong solution $u\in \oSob{1}{p}(\Omega)\cap \Sob{2}{p}(\Omega)$ satisfying 
\[   -\sum_{i,j=1}^n a^{ij}D_{ij} u +\boldb \cdot \nabla u+\lambda u =g\quad \text{in } \Omega.\]
\end{thm}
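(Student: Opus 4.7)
Theorem~\ref{thm:Krylov21}, cited as a special case of Krylov \cite{Krylov21}, would be proved in three steps: large-$\lambda$ solvability by perturbation, a Fredholm reduction, and uniqueness via the Alexsandrov maximum principle (Theorem~\ref{thm:Alexsandrov}).

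\emph{Step 1: solvability for large $\lambda$.} The backbone is the classical $\Sob{2}{p}$-theory for non-divergence elliptic operators with small-BMO coefficients on $C^{1,1}$-domains, which for every $\mu\ge 0$ inverts $u \mapsto -a^{ij}D_{ij}u + \mu u$ as an isomorphism $\Sob{2}{p}(\Omega)\cap \oSob{1}{p}(\Omega) \to \Leb{p}(\Omega)$ with the bound $\|D^2 u\|_{\Leb{p}} + \mu \|u\|_{\Leb{p}} \le C\|f\|_{\Leb{p}}$. Since $p<n$, the Sobolev embedding $\Sob{1}{p}(\Omega)\hookrightarrow \Leb{np/(n-p)}(\Omega)$ together with H\"older yields $\|\boldb\cdot\nabla u\|_{\Leb{p}} \le C\|\boldb\|_{\Leb{n}}\|u\|_{\Sob{2}{p}}$, so the drift term is well-defined. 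To set up a contraction, I would prove a Gerhardt-style $\varepsilon$-inequality in the spirit of Proposition~\ref{prop:Gerhardt},
\[
\|\boldb\cdot\nabla u\|_{\Leb{p}(\Omega)} \le \varepsilon\,\|D^2 u\|_{\Leb{p}(\Omega)} + C_\varepsilon\|u\|_{\Leb{p}(\Omega)},
\]
by approximating $\boldb$ in $\Leb{n}$ by $\boldb_\varepsilon \in C_c^\infty(\Omega;\mathbb{R}^n)$, applying the Hölder/Sobolev bound to the tail $(\boldb-\boldb_\varepsilon)\cdot\nabla u$, and absorbing the smooth part via Gagliardo–Nirenberg $\|\nabla u\|_{\Leb{p}}\le \eta\|D^2 u\|_{\Leb{p}}+C_\eta\|u\|_{\Leb{p}}$. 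For $\lambda\ge \lambda_1(n,p,\delta,R_0,\Omega,\boldb)$ large, the map $u \mapsto v$ defined by $-a^{ij}D_{ij}v + \lambda v = g - \boldb\cdot\nabla u$ is then a contraction on $\Sob{2}{p}\cap\oSob{1}{p}$, giving existence, uniqueness, and an a priori estimate for $\lambda\ge \lambda_1$.

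\emph{Step 2: Fredholm reduction.} Since $\Omega$ is bounded, the embedding $\Sob{2}{p}(\Omega)\cap\oSob{1}{p}(\Omega)\hookrightarrow \Leb{p}(\Omega)$ is compact by Rellich--Kondrachov, so the operator $T := (L+\lambda_1 I)^{-1}$ from Step~1 is compact on $\Leb{p}(\Omega)$. Rewriting $Lu + \lambda u = g$ as $(I + (\lambda-\lambda_1)T)u = Tg$, the Fredholm alternative reduces existence for every $g\in\Leb{p}(\Omega)$ and every $\lambda\ge 0$ to the statement that no nontrivial $u \in \Sob{2}{p}(\Omega)\cap\oSob{1}{p}(\Omega)$ satisfies $Lu+\lambda u = 0$.

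\emph{Step 3: uniqueness --- the main obstacle.} Given a kernel element $u$ of $L+\lambda I$, the plan is to apply Theorem~\ref{thm:Alexsandrov} with $c:=\lambda\ge 0$. If $p\ge n_0$, where $n_0\in(n/2,n)$ is the threshold from Theorem~\ref{thm:Alexsandrov}, then Sobolev gives $u\in C(\overline{\Omega})$ with $u|_{\partial\Omega}=0$, and Alexsandrov applied to $\pm u$ forces $u\equiv 0$. The delicate case is $p<n_0$, where one must bootstrap $u\in \Sob{2}{q}$ for some $q\ge n_0$ before Theorem~\ref{thm:Alexsandrov} can be invoked. This is precisely the hard point: na\"\i vely pairing $\boldb\in\Leb{n}$ with $\nabla u\in\Leb{p^*}$ by H\"older returns only the original exponent $p$, so no direct iteration improves regularity. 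I would therefore use a finer argument --- either a Lorentz-space refinement of Sobolev embedding ($\Sob{1}{p}\hookrightarrow \Lor{p^*}{p}$) combined with H\"older in Lorentz spaces, or a cutoff decomposition $\boldb=\boldb^\flat+\boldb^\sharp$ with $\boldb^\flat\in\Leb{\infty}$ and $\|\boldb^\sharp\|_{\Leb{n}}$ small, the latter absorbed into the Step~1 a priori estimate at a higher exponent. Once $u$ is promoted past the Alexsandrov threshold, Theorem~\ref{thm:Alexsandrov} closes the argument and completes the proof.
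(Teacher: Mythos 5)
You should first be aware that the paper does not prove this statement at all: Theorem \ref{thm:Krylov21} is imported verbatim as ``a special case of \cite[Theorem 4.2]{Krylov21}'' and is used as a black box in the proof of Theorem \ref{thm:uniqueness}. So there is no internal proof to compare against; what follows is an assessment of your reconstruction on its own terms. Your overall architecture (large-$\lambda$ contraction via an $\varepsilon$-inequality, Fredholm reduction by compactness, uniqueness via the Alexsandrov-type maximum principle with a regularity bootstrap) is the natural one, and it mirrors both the strategy of Krylov's paper and the way the present paper handles its own divergence-form problem. Steps 1 and 2 are sound: for $p<n$ the H\"older/Sobolev pairing $\Leb{n}\times\Leb{np/(n-p)}\to\Leb{p}$ makes $\boldb\cdot\nabla u$ a well-defined $\Leb{p}$ function, the splitting $\boldb=\boldb_\varepsilon+(\boldb-\boldb_\varepsilon)$ gives the interpolation inequality you state, and the contraction and Fredholm arguments are routine from there.

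The genuine gap is in Step 3, and you have correctly located where it is but offered one repair that does not work. The Lorentz-space refinement $\Sob{1}{p}\hookrightarrow \Lor{p^*}{p}$ combined with H\"older in Lorentz spaces gives $\boldb\cdot\nabla u\in \Lor{p}{s}$ with $\tfrac1s=\tfrac1p+\tfrac1n$, i.e.\ it only sharpens the \emph{secondary} index below $p$; the integrability exponent stays at $p$, so no iteration of this estimate ever moves you past $p$, let alone past the Alexsandrov threshold $n_0$. Only your second alternative, the decomposition $\boldb=\boldb^\flat+\boldb^\sharp$ with $\boldb^\flat$ bounded and $\norm{\boldb^\sharp}{\Leb{n}}$ small, is viable: then $-a^{ij}D_{ij}u+\boldb^\sharp\cdot\nabla u+\mu u=-\boldb^\flat\cdot\nabla u-\lambda u+\mu u\in\Leb{q}$ with $q=\min(p^*,n_0)$, and since the small perturbation $\boldb^\sharp$ is absorbable at every exponent in $(1,n)$ simultaneously, finitely many iterations of $p\mapsto p^*$ reach $n_0$. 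Even there you are glossing over a necessary consistency step: to conclude $u\in\Sob{2}{q}$ you must solve the equation in $\Sob{2}{q}$ for the operator with drift $\boldb^\sharp$ and large $\mu$, observe that the data also lie in $\Leb{p}$, and invoke uniqueness of the $\Sob{2}{p}$ solution to identify that solution with $u$. Without this identification the a priori $\Sob{2}{q}$ estimate cannot legitimately be applied to $u$. With the Lorentz detour deleted and the consistency argument supplied, your outline is a correct proof scheme.
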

 
% Finally, we use the following extension theorem for Sobolev spaces.  See Adams-Fournier \cite{AF03} and Stein \cite[Chapter VII]{S70} for details. 
%\begin{thm}\label{thm:Stein}
%Let $\Omega$ be a bounded Lipschitz domain in $\mathbb{R}^n$, $m\in \mathbb{N}\cup\{0\}$, and $1<q<\infty$. Then there exists a linear operator $\mathcal{E}:\Sob{m}{q}(\Omega)\rightarrow \Sob{m}{q}(\mathbb{R}^n)$ such that  \begin{enumerate}
%\item[\rm (i)] $u(x)=[\mathcal{E}(u)](x)$ for a.e. $x\in \Omega$,
%\item[\rm (ii)] There exists a constant $C=C(n,q,\Omega)>0$ such that   
%\[ \norm{\mathcal{E}(u)}{\Sob{m}{q}(\mathbb{R}^n)}\leq C \norm{u}{\Sob{m}{q}(\Omega)} \]
%for all $u\in \Sob{m}{q}(\Omega)$. 
%\end{enumerate}
%\end{thm}

Now we are ready to prove the main theorem of this section. 
 
\begin{proof}[Proof of Theorem \ref{thm:uniqueness}]
By Theorem \ref{thm:Alexsandrov}, there exists a number $1<n_0<2$ depending on $\delta$, $\norm{\Div A}{\Leb{2}(\Omega)}$, and $\norm{\boldb}{\Leb{2}(\Omega)}$ such that for $n_0<q<2$, there exists a constant $C$ depending on $q$, $\delta$, $\norm{\Div A}{\Leb{2}(\Omega)}$,  $\norm{\boldb}{\Leb{2}(\Omega)}$, and the diameter of $\Omega$ such that 
\[  u(x)\leq C \norm{(Lu-\lambda u)_{-}}{\Leb{q}(\Omega)}+\sup_{\partial \Omega} u_+\quad \text{in } \Omega  \]
for all $u\in \Sobloc{2}{q}(\Omega)\cap C(\overline{\Omega})$, where
\[  Lu=\sum_{i,j=1}^2 a^{ij}D_{ij}u-(\boldb-\Div A)\cdot \nabla u.\]
Define $q=\frac{1}{2}\left(\max\{2p/(p+2),n_0\}+2\right)$. Then $\max\{n_0,\frac{2p}{p+2}\}<q<2$.  Since $u\in \oSob{1}{p}(\Omega)$ and $p>2$, it follows from  the Sobolev embedding theorem that $u\in C(\overline{\Omega})$. Hence  it suffices to show that $u\in \Sobloc{2}{q}(\Omega)$. To show this, we fix a bounded smooth subdomain  $\Omega'\Subset \Omega$ and let $\zeta \in C_c^\infty(\Omega')$. Since $u$ satisfies \eqref{eq:zero-solution}, we have 
\begin{equation}\label{eq:weak-sol-cut-off}
\int_\Omega A\nabla u \cdot \nabla (\zeta \phi)\myd{x}+\int_\Omega (\boldb \cdot \nabla u+\lambda u)(\zeta \phi)\myd{x}=0
\end{equation}
for all $\phi \in C_c^\infty(\Omega)$.   Since $\Div A \in \Leb{2}(\Omega;\mathbb{R}^2)$ and $u\in \oSob{1}{p}(\Omega)$, $p>2$, it follows that 
 \begin{align}\label{eq:divA} 
 -\int_\Omega \Div A \cdot (u\phi \nabla \zeta) \myd{x}&=\sum_{i,j=1}^2 \int_\Omega a^{ij} D_i (u\phi D_j \zeta) dx \\\nonumber
 &=\sum_{i,j=1}^2 \int_\Omega a^{ij} (D_i u)(D_j \zeta) \phi \myd{x}+\sum_{i,j=1}^2 \int_\Omega u a^{ij}(D_i \phi)(D_j \zeta) dx \\\nonumber
 &\relphantom{=}+\sum_{i,j=1}^2 \int_\Omega u\phi a^{ij}D_{ij} \zeta \myd{x} +\int_\Omega u\phi\left( \sum_{i,j=1}^2 a^{ij} D_{ij} \zeta  \right) \myd{x}  \nonumber
 \end{align} 
 for all $\phi \in C_c^\infty(\Omega)$. Hence by \eqref{eq:weak-sol-cut-off} and \eqref{eq:divA}, we get  
 \begin{equation}\label{eq:cut-off}
 \begin{aligned}
 &\int_\Omega A\nabla (u\zeta) \cdot \nabla \phi  +  (\boldb \cdot \nabla (\zeta u)+\lambda_1 (\zeta u))\phi \myd{x}=-\int_\Omega g \phi \myd{x},
 \end{aligned}
 \end{equation} 
where 
\[   g= u \Div A \cdot \nabla \zeta+\sum_{i,j=1}^2(a^{ij} D_{ij} \zeta ) u + \nabla u \cdot  A\nabla \zeta +A\nabla u \cdot \nabla \zeta  -u\boldb \cdot \nabla \zeta   + (\lambda-\lambda_1)(\zeta u).\]
Note that $g\in \Leb{2}(\Omega)$ since $u\in \oSob{1}{p}(\Omega)$, $p>2$, and $\Div A,\boldb \in \Leb{2}(\Omega;\mathbb{R}^2)$. 

By Theorem \ref{thm:large-lambda}, there exist   $\gamma_0=\gamma_0(p,\delta)>0$ and   $\lambda_1=\lambda_1(p,\delta,\boldb,\Omega')\geq 1$ such that under Assumption \ref{assump:leading-coefficient} $(\gamma_0)$, for $\lambda \geq \lambda_1$, $f\in \Leb{p}(\Omega')$, and $\boldF \in \Leb{p}(\Omega';\mathbb{R}^2)$, there exists a unique weak solution $v\in \oSob{1}{p'}(\Omega)$ such that 
\[    -\Div (A \nabla v)+\boldb \cdot \nabla v +\lambda v=f+\Div \boldF\quad \text{in } \Omega',\quad v=0\quad \text{on } \partial\Omega'. \]

For such $\lambda_1$ and $q=\frac{1}{2}\left(\max\{2p/(p+2),n_0\}+2\right)$, it follows from Theorem \ref{thm:Krylov21} that there exists $\gamma_1=\gamma_1(p,n_0,\delta)>0$ such that under Assumption \ref{assump:leading-coefficient} $(\gamma_1)$, there exists a unique $w\in \oSob{1}{q}(\Omega')\cap \Sob{2}{q}(\Omega')$ satisfying 
\[   -\sum_{i,j=1}^2 a^{ij}D_{ij}w+(\boldb-\Div A)\cdot \nabla w+\lambda_1 w=-g\quad \text{in } \Omega'.  \]
Set $\gamma=\min\{\gamma_0,\gamma_1\}$ and choose $A$ so that $A$  satisfies \eqref{eq:uniformly-elliptic}, $\Div A \in \Leb{2}(\Omega;\mathbb{R}^2)$, and Assumption \ref{assump:leading-coefficient} $(\gamma)$. Since $w\in \oSob{1}{q}(\Omega')\cap\Sob{2}{q}(\Omega')$ and $\Div A \in \Leb{2}(\Omega;\mathbb{R}^2)$, it follows from the Sobolev embedding theorem that  $w\in \oSob{1}{q^*}(\Omega')$  and
\begin{align*} 
 \int_{\Omega'} \Div A \cdot (\phi \nabla w) \myd{x}&=-\sum_{i,j=1}^2\int_{\Omega'} a^{ij}D_i (\phi D_j w)\myd{x}\\
 &=-\sum_{i,j=1}^2\int_{\Omega'} (a^{ij}D_{ij} w)\phi \myd{x}-\int_{\Omega'} A\nabla w \cdot \nabla \phi\myd{x}
 \end{align*}
for all $\phi \in C_c^\infty(\Omega')$. Hence $w$ satisfies  
\[    \int_{\Omega'} A \nabla w \cdot \nabla \phi +(\boldb \cdot \nabla w +\lambda_1 w)\phi \myd{x}=-\int_{\Omega'} g  \phi \myd{x}   \]
for all $\phi \in C_c^\infty(\Omega')$.   Since $2p/(p+2)<q<2$ and $\Omega'$ is bounded, it follows that $\oSob{1}{q^*}(\Omega')\subset \oSob{1}{p}(\Omega')$. Hence by \eqref{eq:cut-off} and Theorem \ref{thm:large-lambda}, we have  $w=\zeta u$ in $\Omega'$, which implies that $\zeta u \in \Sob{2}{q}(\Omega')$ for any $\zeta \in C_c^\infty(\Omega')$.  Since there exists a sequence of smooth bounded subdomains $\{\Omega_k\}$ satisfying $\Omega_k\Subset \Omega_{k+1}\Subset \Omega$  and $\bigcup_{k} \Omega_k = \Omega$ (see e.g. \cite[Proposition 8.2.1]{D08}),  we conclude that $u\in \Sobloc{2}{q}(\Omega)$.

 Since $\Div A \in \Leb{2}(\Omega;\mathbb{R}^2)$, we have 
\[    \sum_{i,j=1}^2\int_{\Omega} a^{ij} D_i \left[ (D_j u)\phi \right]\myd{x} = -\int_\Omega \phi \Div A \cdot \nabla u\myd{x}  \]
for all $\phi \in C_c^\infty(\Omega)$. Hence  $u$ satisfies 
\[  -\sum_{i,j=1}^2a^{ij}D_{ij} u + (\boldb  - \Div A)\cdot \nabla u +\lambda u =0\quad \text{in } \Omega.  \]Therefore  by Theorem \ref{thm:Alexsandrov}, we conclude that $u$ is identically zero in $\Omega$. This completes the proof of Theorem \ref{thm:uniqueness}. 
\end{proof} 

\section{Proof of Theorem \ref{thm:main-theorem}}\label{sec:completion}
This section is devoted to a proof of Theorem \ref{thm:main-theorem}. For the case of sufficiently large $\lambda$, this was done by Theorem \ref{thm:large-lambda}. To treat the case of small $\lambda$, we use a functional analytic argument as in Kang-Kim \cite{KK15,KK17} and Kim-Kwon \cite{KK18} to deduce that it suffices to prove the uniqueness part of (i). To explain this, let  $2<p<\infty$ be fixed. Then 
 \[   (u,v)\mapsto\action{\mathcal{L}_p u,v} = \int_\Omega (A\nabla u) \cdot \nabla v \myd{x}+\int_\Omega (\boldb\cdot \nabla u)v\myd{x} \]
 and 
 \[  (u,v)\mapsto \action{\mathcal{L}_{p'}^* v,u} = \int_\Omega (A^T\nabla v) \cdot \nabla u\myd{x}+\int_\Omega (v\boldb)\cdot\nabla u  \myd{x}=\action{\mathcal{L}_p u,v} \]
 define  bounded linear operators
 \[   \mathcal{L}_p : \oSob{1}{p}(\Omega)\rightarrow \Sob{-1}{p}(\Omega)\quad \text{and}\quad \mathcal{L}_{p'}^* : \oSob{1}{p'}(\Omega)\rightarrow \Sob{-1}{p'}(\Omega).  \]
 Let $\mathcal{I}_p : \Leb{p}(\Omega)\rightarrow (\Leb{p'}(\Omega))'$ be the isomorphism defined by 
 \[   \action{\mathcal{I}_p u,v} = \int_\Omega u v \myd{x}\quad \text{for all }  (u,v) \in \Leb{p}(\Omega)\times \Leb{p'}(\Omega). \]
We also define $\Div_p : \Leb{p}(\Omega;\mathbb{R}^2) \rightarrow \Sob{-1}{p}(\Omega)$ by 
\[   \action{\Div_p \boldF,\varphi} = -\int_\Omega \boldF \cdot \nabla \varphi \myd{x}\quad \text{for all } \varphi \in \oSob{1}{p'}(\Omega).\]
Then it is easy to show that $u\in \oSob{1}{p}(\Omega)$ is a weak solution of the problem \eqref{eq:nondivergence-type} if and only if $(\mathcal{L}_p + \lambda \mathcal{I}_p)u=\mathcal{I}_p f+\Div_p \boldF$, while $v\in \oSob{1}{p'}(\Omega)$ is a weak solution of \eqref{eq:divergence-type} if and only if $(\mathcal{L}_{p'}^*+\lambda \mathcal{I}_{p'})v=\mathcal{I}_{p'}g + \Div_{p'}\mathbf{G}$. 

By Theorem \ref{thm:large-lambda}, there exist $\gamma=\gamma(p,\delta)>0$, $\theta=\theta(p,\delta)>0$, and  $\lambda_1\geq 1$ such that under Assumptions \ref{assump:leading-coefficient} $(\gamma)$ and \ref{assump:domain-regularity} $(\theta)$, the operators $\mathcal{L}_p+\lambda_1 \mathcal{I}_p$ and $\mathcal{L}_{p'}^*+\lambda_1 \mathcal{I}_{p'}$ are invertible. Then for $0\leq \lambda \neq \lambda_1$, the linear operators 
\[   \mathcal{K}_{p,\lambda} = (\lambda_1-\lambda)(\mathcal{L}_p+\lambda_1 \mathcal{I}_{p})^{-1}\circ \mathcal{I}_p : \Leb{p}(\Omega)\rightarrow \Leb{p}(\Omega) \]
and 
\[   \mathcal{K}_{p',\lambda}^* = (\lambda_1-\lambda)(\mathcal{L}_{p'}^*+\lambda_1 \mathcal{I}_{p'})^{-1}\circ \mathcal{I}_{p'} : \Leb{p'}(\Omega)\rightarrow \Leb{p'}(\Omega) \]
are bounded and even compact by Rellich-Kondrachov's embedding theorem. Since 
\[   \int_\Omega (\mathcal{K}_{p,\lambda} u)v \myd{x} = \int_\Omega (\mathcal{K}_{p',\lambda}^* v)u \myd{x} \]
for all $(u,v) \in \Leb{p}(\Omega)\times \Leb{p'}(\Omega)$, it follows that 
\begin{equation}\label{eq:duality-relation-K}
\mathcal{K}_{p',\lambda}^* = \mathcal{I}_{p'}^{-1}\circ \mathcal{K}_{p,\lambda}'\circ \mathcal{I}_{p'},  
\end{equation}
where $\mathcal{K}_{p,\lambda}' : (\Leb{p}(\Omega))'\rightarrow (\Leb{p}(\Omega))'$ is the adjoint operator of $\mathcal{K}_{p,\lambda}$. Note also that 
\[   \ker (Id-\mathcal{K}_{p,\lambda})=\ker (\mathcal{L}_p+\lambda \mathcal{I}_p)\quad \text{and}\quad \ker (Id-\mathcal{K}_{p',\lambda}^*)=\ker (\mathcal{L}_{p'}^*+\lambda \mathcal{I}_{p'}). \]
By \eqref{eq:duality-relation-K} and the Fredholm alternative theorem (see e.g. \cite[Theorem 6.6]{Brezis11}), we have 
\begin{equation}\label{eq:image-duality-1}
 \mathrm{Im}(Id-\mathcal{K}_{p,\lambda}) = \left\{ u\in \Leb{p}(\Omega) : \int_\Omega uv \myd{x}=0\quad \text{for all } v \in \ker (\mathcal{L}_{p'}^*+\lambda \mathcal{I}_{p'}) \right\},  
\end{equation}
\begin{equation}\label{eq:image-duality-2} \mathrm{Im}(Id-\mathcal{K}_{p',\lambda}^*) = \left\{ v\in \Leb{p'}(\Omega) : \int_\Omega vu \myd{x}=0\quad \text{for all } u \in \ker (\mathcal{L}_p+\lambda \mathcal{I}_p) \right\},  
\end{equation}
and 
\begin{equation}\label{eq:kernel-characterization} 
  \dim\ker (\mathcal{L}_{p}+\lambda \mathcal{I}_{p})=\dim  \ker (\mathcal{L}_{p'}^*+\lambda \mathcal{I}_{p'}) <\infty 
\end{equation}
for all $\lambda \geq 0$. 

Now we are ready to prove Theorem \ref{thm:main-theorem}. 
\begin{proof}[Proof of Theorem \ref{thm:main-theorem}]
Choose $(\gamma_0,\theta_0,\lambda_1)$ from Theorem \ref{thm:large-lambda} and choose $\gamma_1$ from Theorem \ref{thm:uniqueness}, and let $\gamma=\min\{\gamma_0,\gamma_1\}$.  Under Assumptions \ref{assump:leading-coefficient} $(\gamma)$ and \ref{assump:domain-regularity} $(\theta)$,  Theorem \ref{thm:main-theorem} holds for all  $\lambda \geq \lambda_1$ by Theorem \ref{thm:large-lambda}.     Suppose that $0\leq \lambda <\lambda_1$. By  \eqref{eq:kernel-characterization} and Theorem \ref{thm:uniqueness}, we have 
\begin{equation}\label{eq:uniqueness-kernel}
  \ker (\mathcal{L}_{p}+\lambda \mathcal{I}_p)=\ker(\mathcal{L}_{p'}^*+\lambda  \mathcal{I}_{p'})=\{0\} 
\end{equation}
for any $\lambda \geq 0$.  Hence by \eqref{eq:image-duality-1} and \eqref{eq:image-duality-2}, we have 
\begin{equation}\label{eq:image-Lp} 
  \mathrm{Im}(Id-\mathcal{K}_{p,\lambda})=\mathrm{Im}(Id-\mathcal{K}_{p',\lambda}^*)=\Leb{p}(\Omega). 
\end{equation}
Given $f\in \Leb{p}(\Omega)$ and $\boldF \in \Leb{p}(\Omega;\mathbb{R}^2)$, let 
\[    w=(\mathcal{L}_{p}+\lambda_1  \mathcal{I}_{p})^{-1} (\mathcal{I}_pf+\Div_p \boldF).\]
By \eqref{eq:image-Lp},  there exists $u\in \Leb{p}(\Omega)$ such that 
\[     u-\mathcal{K}_{p,\lambda}u=w.\]
Since $(\mathcal{L}_{p}+\lambda_1  \mathcal{I}_{p})^{-1}$ maps $\Sob{-1}{p}(\Omega)$ to $\oSob{1}{p}(\Omega)$, it follows from the definition of $\mathcal{K}_{p,\lambda}$ that $w, \mathcal{K}_{p,\lambda} u \in \oSob{1}{p}(\Omega)$. Hence 
\[    u\in \oSob{1}{p}(\Omega)\quad \text{and}\quad (\mathcal{L}_{p}+\lambda \mathcal{I}_{p})u=\mathcal{I}_pf+\Div_p \boldF. \]  
This proves that $u$ is a weak solution of the problem \eqref{eq:nondivergence-type}. Similarly, we can also prove the existence of weak solution of the problem \eqref{eq:divergence-type}. By \eqref{eq:uniqueness-kernel}, we also have uniqueness of weak solution of the problem \eqref{eq:divergence-type}. It remains to show the desired estimate. 

Let $f\in \Leb{p}(\Omega)$ and $\boldF \in \Leb{p}(\Omega;\mathbb{R}^2)$. Define 
\[  \action{\ell,\phi}=\int_\Omega f \phi \myd{x}-\int_\Omega \boldF \cdot \nabla \phi \myd{x}\quad \text{for all }\phi \in C_c^\infty(\Omega).\]
Then 
\[  \ell \in \Sob{-1}{p}(\Omega)\quad \text{and}\quad \norm{\ell}{\Sob{-1}{p}(\Omega)}\leq \norm{f}{\Leb{p}(\Omega)}+\norm{\boldF}{\Leb{p}(\Omega)}. \]
Since the operator $\mathcal{L}_p+\lambda \mathcal{I}_p$ is bijective, it follows that there exists $u_\lambda\in \oSob{1}{p}(\Omega)$ such that 
\[    (\mathcal{L}_p +\lambda \mathcal{I}_p) u_\lambda =\ell. \]
Moreover, it follows from the bounded inverse theorem  (see e.g. \cite[Corollary 2.7]{Brezis11}) that there exists a constant $C_{\lambda}>0$ independent of $u_\lambda$ and $\ell$ such that 
\[    \norm{u_\lambda}{\Sob{1}{p}(\Omega)}\leq C_\lambda \norm{\ell}{\Sob{-1}{p}(\Omega)}\leq C_\lambda (\norm{f}{\Leb{p}(\Omega)}+\norm{\boldF}{\Leb{p}(\Omega)}). \]

In fact, we can choose a constant independent of $\lambda$. Indeed, for $0\leq \lambda \leq  \lambda_1$, let 
\[ E_\lambda = \left\{ \mu \in [0,\lambda_1] : C_\lambda |\lambda -\mu|<\frac{1}{2}\right\}. \]
For $\mu \in E_{\lambda}$, since the operator $\mathcal{L}_{p}+\lambda \mathcal{I}_p$ is bijective,  there exists $u_\mu \in \oSob{1}{p}(\Omega)$ such that 
\[     (\mathcal{L}_{p}+\lambda \mathcal{I}_p) u_\mu =\mathcal{I}_pf + \Div_p \boldF  +(\lambda -\mu)\mathcal{I}_pu_{\mu}. \]
Moreover,  we have 
\[   \norm{u_{\mu}}{\Sob{1}{p}(\Omega)}\leq 2C_\lambda (\norm{f}{\Leb{p}(\Omega)}+\norm{\boldF}{\Leb{p}(\Omega)})  \]
for all $\mu \in E_\lambda$. Hence by the compactness of $[0,\lambda_1]$, there is a constant $C>0$ independent of $u_\lambda$, $f$, $\boldF$, and $\lambda$ such that %covering argument!!!! 
\[   \norm{u_\lambda}{\Sob{1}{p}(\Omega)}\leq 2C(\norm{f}{\Leb{p}(\Omega)}+\norm{\boldF}{\Leb{p}(\Omega)}).   \]
By the Poincar\'e inequality, we conclude that 
\[   \norm{\nabla u_\lambda}{\Leb{p}(\Omega)}+\lambda^{1/2}\norm{u_\lambda}{\Leb{p}(\Omega)}\leq C (\norm{f}{\Leb{p}(\Omega)}+\norm{\boldF}{\Leb{p}(\Omega)}) \]
for all $\lambda \in [0,\lambda_1]$. 
This proves the desired estimate for a solution of the problem \eqref{eq:nondivergence-type}. Following exactly the same argument, one can derive a similar estimate for a weak solution of the problem \eqref{eq:divergence-type}. This completes the proof of Theorem \ref{thm:main-theorem}.
\end{proof} 
\bibliographystyle{amsplain} 
\providecommand{\bysame}{\leavevmode\hbox to3em{\hrulefill}\thinspace}
\providecommand{\MR}{\relax\ifhmode\unskip\space\fi MR }
% \MRhref is called by the amsart/book/proc definition of \MR.
\providecommand{\MRhref}[2]{%
  \href{http://www.ams.org/mathscinet-getitem?mr=#1}{#2}
}
\providecommand{\href}[2]{#2}

\end{document}